\journal{Opeartions Research Letter}
\newtheorem{theorem}{Theorem}
\newtheorem{lemma}[theorem]{Lemma}
\newtheorem{corollary}{Corollary}
\newtheorem{proposition}{Proposition}
\newdefinition{assumption}{Assumption}
\newdefinition{definition}{Definition}
\newproof{proof}{Proof}
\newproof{pot}{Proof of Theorem \ref{thm2}}
\begin{document}

\begin{frontmatter}

%% Title, authors and addresses

%% use the tnoteref command within \title for footnotes;
%% use the tnotetext command for theassociated footnote;
%% use the fnref command within \author or \address for footnotes;
%% use the fntext command for theassociated footnote;
%% use the corref command within \author for corresponding author footnotes;
%% use the cortext command for theassociated footnote;
%% use the ead command for the email address,
%% and the form \ead[url] for the home page:
%% \title{Title\tnoteref{label1}}
%% \tnotetext[label1]{}
%% \author{Name\corref{cor1}\fnref{label2}}
%% \ead{email address}
%% \ead[url]{home page}
%% \fntext[label2]{}
%% \cortext[cor1]{}
%% \affiliation{organization={},
%%             addressline={},
%%             city={},
%%             postcode={},
%%             state={},
%%             country={}}
%% \fntext[label3]{}

\title{Convergence and Bound Computation for Chance Constrained Distributionally Robust Models using Sample Approximation}

%% use optional labels to link authors explicitly to addresses:
%% \author[label1,label2]{}
%% \affiliation[label1]{organization={},
%%             addressline={},
%%             city={},
%%             postcode={},
%%             state={},
%%             country={}}
%%
%% \affiliation[label2]{organization={},
%%             addressline={},
%%             city={},
%%             postcode={},
%%             state={},
%%             country={}}

\author[inst1]{Jiaqi Lei}

\affiliation[inst1]{organization={Department of Industrial Engineering and Management Sciences, Northwestern University},%Department and Organization
            %addressline={2145 Sheridan Rd}, 
            city={Evanston},
            postcode={60208}, 
            state={IL},
            country={USA}}

\author[inst1]{Sanjay Mehrotra}
%\author[inst1,inst2]{Author Three}

%\affiliation[inst2]{organization={Department Two},%Department and Organization
%            addressline={Address Two}, 
%            city={City Two},
%            postcode={22222}, 
%            state={State Two},
%            country={Country Two}}

\begin{abstract}
%% Text of abstract
This paper considers a distributionally robust chance constraint model with a general ambiguity set. We show that a sample based approximation of this model converges under suitable sufficient conditions. We also show that upper and lower bounds on the optimal value of the model can be estimated statistically. Specific ambiguity sets are discussed as examples.
\end{abstract}

%%Graphical abstract
%\begin{graphicalabstract}
%\includegraphics{grabs}
%\end{graphicalabstract}

%%Research highlights
%\begin{highlights}
%\item Research highlight 1
%\item Research highlight 2
%\end{highlights}

\begin{keyword}
%% keywords here, in the form: keyword \sep keyword
distributionally robust optimization \sep sample based approximation\sep confidence interval
%% PACS codes here, in the form: \PACS code \sep code
%\PACS 0000 \sep 1111
%% MSC codes here, in the form: \MSC code \sep code
%% or \MSC[2008] code \sep code (2000 is the default)
%\MSC 0000 \sep 1111
\end{keyword}

\end{frontmatter}

%% \linenumbers

%% main text
\section{Introduction}
A robust optimization model is typically thought of as a substitute for a chance constraint optimization model \cite{Ben98, Bertsimas04}. %since the framework can be motivated if the model parameters are normally distributed.  
A distributionally robust optimization (DRO) model assumes partial knowledge of the probability distribution specifying the data and it describes an optimization model where the distribution belongs to an ambiguity set ${\cal P}$ (see \cite{Rahimian19} for a review). Recently, it was observed that a chance constraint optimization model benefits from a DRO framework in improving the out-of-sample performance of satisfaction of the chance constraint \cite{WangLiMehrotra22}. The model in \cite{WangLiMehrotra22} assumed that a finitely supported nominal distribution is available when describing the ambiguity set. The finitely supported nominal distribution is not always available. For example, $\mathcal{P}$ may be specified through moment-based constraints \cite{Delage10,Mehrotra14} (see Section~\ref{sec:examples}). This paper, under suitable conditions, shows that using (sample based) discretization of the support on which the probability distributions in ${\cal P}$ are specified, the distributionally robust chance constraint model approximation converges. Moreover, it is possible to generate statistical estimates of the lower and upper bounds for the model. More specifically, the results are shown for the  robust chance constraint optimization model:
\begin{equation}\label{dro_model}
\begin{aligned}
v^*:=\min_{x\in\mathcal{X}} \; q(x):=\max_{\mathbb{P}\in \mathcal{P}}\; &\mathbb{E}_\mathbb{P}[F(x,\xi)]\\
\text{s.t.}\; &\text{Prob}_\mathbb{P}\left(G(x,\xi)\leq 0 \right)\geq 1-\theta, 
\\
%& \mathcal{P}=\{\mathcal{P}|\mathbb{E}[\mu_j]=\bar{\mu_j},\mathbb{E}[(\mu_j-\bar{\mu_j})^2]\leq \sigma_j^2\}
\end{aligned}
\end{equation}
where $\theta\in[0,1]$ is a safety level.

Note that the robustness in the chance constraint in \eqref{dro_model} is specified through the objective function. The model in \eqref{dro_model} captures the situation where the uncertainty in the objective also impacts the uncertainty in constraints. For example, the same data determines the expected reward and risk in a classical portfolio optimization model. The objective functions of the models can be formulated with respect to the expected utility over the choice of a distribution. The constraints include the requirements of the maximum risk level to be satisfied with a given probability \cite{Fabozzi07,Omidi17}. The results are proved under the following assumptions, and Assumption~\ref{ambiguity_assumption}.  
\begin{assumption}\label{basic_assumption}
(1.1) The support $\Xi\subseteq\mathbb{R}^d$ is compact, and $||\xi||\leq M_\Xi$ for any $\xi\in \Xi$, which also implies that $\Xi$ has the diameter $\max_{\xi_i,\xi_j\in\Xi}||\xi_i-\xi_j||\leq 2M_\Xi$. (1.2) $\mathcal{X}\subseteq\mathbb{R}^n$ is a nonempty set and ${\cal X}$ can be mixed-integer. (1.3) The solution set of \eqref{dro_model} is assumed to be nonempty and compact with finite optimal values; and a solution of the inner problem in \eqref{dro_model} exists. (1.4) For a given $\mathbb{P}\in\mathcal{P}$, we assume that the optimal value $v_\mathbb{P}(\theta):=\min_{x\in\mathcal{X}}\{\mathbb{E}_\mathbb{P}[F(x,\xi)]|\text{Prob}_\mathbb{P}\Big(G(x,\xi)\leq 0 \Big)\geq 1-\theta\}$ is locally Lipschitz continuous in $\theta$ with Lipschitz constant $\kappa^\theta$. See \cite[Theorem 1]{henrion2004perturbation} for detailed conditions on $F(x,\xi)$ and $G(x,\xi)$ ensuring that this assumption holds. (1.5) For all $x\in\mathcal{X}$ and $\xi\in\Xi$, we further assume that the functions $F(x, \xi)$, $G(x,\xi):\mathbb{R}^n \times \Xi \rightarrow \mathbb{R}^m$ are measurable and Lipschitz continuous in $\xi$ with Lipschitz constants $\kappa^F$, $\kappa^G$, respectively. (1.6) The functions $F(\cdot,\cdot)$ and $G(\cdot,\cdot)$ are assumed to be finite valued. (1.7) We also assume that the continuous distributions in $\mathbb{P}\in\mathcal{P}$ have a bounded probability density $f_\mathbb{P}(\cdot)$  with the bound $C^\mathcal{P}$\footnote{A probability density is bounded if for the random variable 
$Y$ with the probability density $f_\mathbb{P}(\cdot)$ and domain $[a,b]$, i.e.  $\text{Prob}_\mathbb{P}(a\leq Y\leq b)=\int_a^b f_\mathbb{P}(y)dy$, the probability density $f_\mathbb{P}(\cdot)$ satisfies  $f_\mathbb{P}(y)\leq C^\mathcal{P}$ for any $a\leq y\leq b$.}.  
\end{assumption}

\subsection{Literature Review}
In the current literature, the convergence results that ensure asymptotic consistency of the optimal value and establish a quantitative relationship between the sample size and error in optimal value are known only for the case where the distributional robustness is specified for the objective function  \cite{Bertsimas18,Liu19,Xu18}.
For example, Bertsekas and Gupta \cite{Bertsimas18} proposed a Robust Sample Average Approximation (SAA), which linked the SAA and \eqref{DRO} model as 
\begin{equation}\tag{DRO}\label{DRO}
   \min_{x\in\mathcal{X}} \max_{\mathbb{P}\in \mathcal{P}
   %(\xi_1,\dots,\xi_\omega)
   }
   \;\mathbb{E}_\mathbb{P}[F(x,\xi)],
\end{equation}
where $\mathcal{P}$ is a distributional uncertainty set with data-driven samples. The model retained tractability, asymptotic properties, and finite-sample performance guarantee of the SAA model. 
Liu, Pichler, and Xu \cite{Liu19} introduced the discrete approximation of \eqref{DRO}, 
where $\mathcal{P}$ is the discretization of the moment-based ambiguity set. A key result in their paper quantified the difference between the ambiguity set and its discretization. It also showed the convergence of the optimal solutions.
Xu, Liu, and Sun \cite{Xu18} solved \eqref{DRO} with $\mathcal{P}$ specified as a moment-based ambiguity set by reformulating the inner maximization problem using duality, and discretizing $\Xi$ through Monte Carlo sampling.  They also provided an upper bound for the optimal value of DRO using duality and a lower bound by a discrete approximation approach. 

Additionally, some papers discuss convergence results for models that include distributionally robust chance constraints but without the robustness in the objective function \cite{Jiang24,Zymler13,guo2015stability,sun2019convergence}. 
Jiang and Peng \cite{Jiang24} considered the models
with distributionally robust chance constraints \eqref{DRCP} as 
\begin{equation}\tag{DRCP}\label{DRCP}
 \min_{x\in\mathcal{X}}F(x),\; \text{s.t.}\inf_{\mathbb{P}\in\mathcal{P}}\text{Prob}_\mathbb{P}\left(G(x,\xi)\leq 0 \right)\geq 1-\theta.   
\end{equation}
Note that in this model, the objective function is deterministic. Using a discrete and finite support set, the authors investigated the discrete approximation of the model and provided convergence results for the optimal solutions. 
Zymler, Kuhn and Rustem \cite{Zymler13} considered \eqref{DRCP} using the moment-based ambiguity sets and approximating the constraints by worst-case conditional-value-at-risk (CVaR) constraints. 
%The approximation quality was controlled by a set of scaling parameters. 
Guo, Xu and Zhang \cite{guo2015stability} analyzed the convergence of the optimal value and the optimal solutions of \eqref{DRCP} when a generic approximation of the ambiguity set converges to the ambiguity set $\mathcal{P}$. Sun, Zhang and Chen \cite{sun2019convergence} proposed a DC approximation method to \eqref{DRCP} and gave a convergence analysis of the DC approximation.

Other papers consider the robustness in both the objective function and the constraints, but the distribution $\mathbb{P}$ are decided separately in the models \cite{Xie17,Zhou20}.
Xie and Ahmed \cite{Xie17} provided a distributionally robust chance-constrained model \eqref{DRCCM}
\begin{equation}\tag{DRCCM}\label{DRCCM}
   \min_{x\in\mathcal{X}} \max_{\mathbb{P}\in \mathcal{P}}\;\mathbb{E}_\mathbb{P}[F(x,\xi)], \; \text{s.t.} \inf_{\mathbb{P}\in\mathcal{P}}\text{Prob}_\mathbb{P}\left(G(x,\xi)\leq 0 \right)\geq 1-\theta 
\end{equation}
using the moment-based ambiguity set with an application to renewable energy in optimal power flow. To solve the model efficiently, they reformulated the constraint using duality and second-order cone programming. 
Zhou et al. \cite{Zhou20} proposed the \eqref{DRCCM} specified using the Wasserstein ambiguity set with an application to real-time dispatch. The CVaR approximation was applied to replace the robust chance constraints with deterministic linear constraints.

In contrast, the model in \eqref{dro_model} takes the robustness of the objective and constraints into consideration simultaneously. The worst-case distribution affects the objective and constraints simultaneously. As mentioned, this coupling of objective function and constraints is relevant for situations where the chance constraints and the objective function are driven from the same underlying data; for example, in portfolio optimization models.

\subsection{Contributions}
We study a sample-based approximation model and show convergence in the objective value of the approximated model to \eqref{dro_model} under a general condition on the distance between the ambiguity set and its discretization.   We provide examples of ambiguity sets that satisfy this assumption. We also suggest approaches for computing $100(1-\alpha)\%$ confidence interval of the optimal objective value of the approximated model. 

\section{Model Approximation and Known Technical Results}\label{subsec:stoch_model}
 Let $\mathcal{D}$ be the set of all distributions that can be specified on $\Xi$, and  $\mathcal{P}\subseteq \mathcal{D}$. Let $\Xi_{|\Omega|}:=\{\xi_1, \xi_2,\dots,\xi_{|\Omega|}\}$ 
 be  a discretization of $\Xi$ 
 %following a uniform distribution, 
 and $\xi_j\neq \xi_j$ if $i\neq j$. The elements of
 $\Xi_{|\Omega|}$ can be quantizers, or sampled to follow an independent identically distributed uniform distribution on  $\Xi$  (see Section~\ref{sec:known_results}).
 Let $\mathbb{P}_{|\Omega|}$ be a discrete probability distribution on the support $\Xi_{|\Omega|}$ and $\mathbb{P}_{|\Omega|}\in\mathcal{P}_{|\Omega|}  \subset {\cal P}$, where $\mathcal{P}_{|\Omega|}$ is the set of all discrete probability distributions in $\mathcal{P}$ that are supported on the set $\Xi_{|\Omega|}$. 

 The model in \eqref{dro_model} is approximated as:
\begin{equation}\label{dro_model_sampled}
\begin{aligned}
\hat{v}_{|\Omega|}:=\min_{x\in\mathcal{X}} q_{|\Omega|}(x):=\max_{\mathbb{P}_{|\Omega|}\in \mathcal{P}_{|\Omega|}} &\mathbb{E}_{\mathbb{P}_{|\Omega|} }[F(x,\xi)]\\%=\sum_{\omega=1}^{|\Omega|} p_\omega F(x,\xi_\omega)\\
\text{s.t.}\; & %\sum_{\omega=1}^{|\Omega|} p_\omega1_{(0, \infty)}G(x,\xi_\omega)\leq \theta
\text{Prob}_{\mathbb{P}_{|\Omega|}}\left(G(x,\xi)\leq  0 \right)\geq 1-\theta.\\
%& \mathcal{P}=\{\mathcal{P}|\mathbb{E}[\mu_j]=\bar{\mu_j},\mathbb{E}[(\mu_j-\bar{\mu_j})^2]\leq \sigma_j^2\}
\end{aligned}
\end{equation}
We assume that a solution of the inner problem in \eqref{dro_model_sampled} exists. This assumption can be ensured by a sufficiently large $|\Omega|$ for specific definitions of ${\cal P}$ has a non-empty interior (e.g., Assumption~\ref{def_slater}). The chance constraint in \eqref{dro_model_sampled} can be rewritten as:
\begin{equation}\label{specific_app_const}
   \begin{aligned}
&\text{Prob}_{\mathbb{P}_{|\Omega|}}\left(G(x,\xi)\leq 0 \right)
  = \mathbb{E}_{\mathbb{P}_{|\Omega|}}\left[\mathbbm{1}_{(-\infty,0]}\left(G(x, \xi)\right)\right] = \sum_{\omega=1}^{|\Omega|} p_\omega\mathbbm{1}_{(-\infty,0] }\left(G(x, \xi_\omega)\right)\geq 1-\theta,
   \end{aligned}
\end{equation}
where if $G(x,\xi_{\omega})\leq 0$,  $\mathbbm{1}_{(-\infty,0] }\left(G(x, \xi_{\omega})\right)=1$, otherwise $\mathbbm{1}_{(-\infty,0] }\left(G(x, \xi_{\omega})\right)=0$. 

\subsection{Additional Definitions and Assumptions}
\begin{definition}(Hausdorff Distances) \cite{ Liu19,Gibbs02}\label{def:set_distance}
Let $\|\cdot\|$ denote the 2-norm of a vector, and the Frobenius norm of a matrix. Let $A,B \subseteq \mathbb{R}^n$,

\begin{equation}
\begin{aligned}
 &\mathrm{d}(a,B):=\min_{b\in B} \|a-b\|, \quad D(A, B):=\max_{a\in A} \mathrm{d}(a,B),   
\end{aligned}   
\end{equation}
and 
\begin{equation}\label{beta_distance}
    \beta_{|\Omega|}:=D(\Xi,\Xi_{|\Omega|}).
\end{equation}

Let $\mathbb{P}, \mathbb{Q}\in \mathcal{D}$, and $\mathcal{G}$ be a family of Lipschitz continuous functions with constant 1, i.e.,
\begin{equation}
\begin{aligned}\label{define_functionG}
      \mathcal{G} = \{g\mid
|g(\xi')-g(\xi'')|\leq \|\xi'-\xi''\|,\; \forall \xi',\xi''\in \Xi\;\}. 
    \end{aligned}
\end{equation}

Then
\begin{equation}\label{define_distanceP}
    \rho(\mathbb{Q},\mathbb{P}):=\sup _{g \in \mathcal{G}}\left|\mathbb{E}_{\mathbb{Q}}[g(\xi)]-\mathbb{E}_{\mathbb{P}}[g(\xi)]\right|
\end{equation}
is the Kantorovich metric \cite{Edwards11} between probability distributions $\mathbb{P}, \mathbb{Q}$ (see Theorem~\ref{duality} for the relation between the Kantorovich metric and the Wasserstein metric).
The metric between the set ${\cal P}$ and $\mathbb{Q}$ is defined as:
\begin{equation}\label{define_distancePSet}
    \mathrm{d}(\mathbb{Q}, \mathcal{P}):=\min_{\mathbb{P} \in \mathcal{P}}\rho(\mathbb{Q},\mathbb{P}).
\end{equation}
Finally, the Hausdorff metric between $\mathcal{P}_{|\Omega|}$ and $\mathcal{P}$ is defined as 
\begin{equation}\label{define_distanceSet}
\mathrm{H}\left(\mathcal{P}_{|\Omega|}, \mathcal{P}\right):=\max \left\{\max_{\mathbb{P}_{|\Omega|}\in\mathcal{P}_{|\Omega|}} \mathrm{d}\left(\mathbb{P}_{|\Omega|}, \mathcal{P}\right), \max_{\mathbb{P}\in\mathcal{P}}  \mathrm{d}\left(\mathbb{P}, \mathcal{P}_{|\Omega|}\right)\right\} .
\end{equation}
\end{definition}

\subsubsection{Technical Assumption}
We make a general assumption on the discretization of the set $\mathcal{P}$. This assumption can be verified in special cases (Section~\ref{sec:examples}).
\begin{assumption}\label{ambiguity_assumption}
There exists a positive constant $C^H$, such that $\mathrm{H}\left(\mathcal{P}_{|\Omega|}, \mathcal{P}\right) \leq C^H\beta_{|\Omega|}$, where  $\mathrm{H}(\cdot,\cdot)$ is defined in \eqref{define_distanceSet} and $\beta_{|\Omega|}$ is defined in \eqref{beta_distance}. 
\end{assumption}

\subsection{Known Results}\label{sec:known_results}
We first introduce some results from
Liu et al. \cite{Liu19} in this section. These results provide the convergence rates of the bound $\beta_{|\Omega|}$ for different ways of generating  $\xi_1,\xi_2,\dots,\xi_\omega$ from $\Xi$. The first approach is to generate these points as samples from the uniform distribution over the support $\Xi$. 
\begin{proposition}\cite[Page 28]{Liu19}\cite[Section 2.2]{zhigljavsky2007stochastic}\label{beta_converge_dist}
Let $\xi_1, \xi_2,\dots,\xi_\omega$ be uniformly distributed on the support $\Xi\subseteq\mathbb{R}^d$. Then  $\beta_{|\Omega|}$
follows an extreme value distribution with
\begin{equation}\label{extreme_value_distribution}
\lim_{|\Omega|\rightarrow \infty}\frac{|\Omega|(2\beta_{|\Omega|})^d-\log {|\Omega|}}{\log\log {|\Omega|}}=d-1 \quad w.p.1,
\end{equation}
where $w.p.1$ means with probability 1.
\end{proposition}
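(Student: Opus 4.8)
The quantity $\beta_{|\Omega|}=D(\Xi,\Xi_{|\Omega|})=\max_{\xi\in\Xi}\min_{1\le j\le|\Omega|}\|\xi-\xi_j\|$ is the covering radius (dispersion) of the random point cloud $\{\xi_1,\dots,\xi_{|\Omega|}\}$, so that for every $t>0$,
\[
\mathrm{Prob}\bigl(\beta_{|\Omega|}>t\bigr)=\mathrm{Prob}\Bigl(\Xi\not\subseteq\textstyle\bigcup_{j=1}^{|\Omega|}\bar B(\xi_j,t)\Bigr),
\]
the probability that $|\Omega|$ i.i.d.\ uniform closed balls of radius $t$ fail to cover $\Xi$. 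The whole statement reduces to locating, for radii $t_{|\Omega|}$ with $\kappa\,|\Omega|\,(2t_{|\Omega|})^d=\log|\Omega|+a\log\log|\Omega|$ (where $\kappa$ depends only on $\Xi$ and the displayed form tacitly takes $\kappa=1$), the value of $a$ at which this coverage probability switches from $o(1)$ to $1-o(1)$; the claim is that the switch occurs at $a=d-1$ with remaining fluctuations of order $o(\log\log|\Omega|)$, so that dividing by $\log\log|\Omega|\to\infty$ yields the displayed limit.

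\emph{Step 1: the correct order.} A crude two-sided estimate already gives $|\Omega|\beta_{|\Omega|}^d\asymp\log|\Omega|$ almost surely. For the upper bound, tessellate $\Xi$ into $N\asymp\mathrm{vol}(\Xi)\,r^{-d}$ cells of diameter $O(r)$ and Lebesgue measure $\gtrsim r^d$ (slivers near $\partial\Xi$ discarded); since ``every cell hit'' implies $\beta_{|\Omega|}=O(r)$, the union bound gives $\mathrm{Prob}(\text{some cell empty})\le N\exp(-c\,|\Omega|\,r^d)$, and with $|\Omega|\,r^d\asymp\log|\Omega|$, summing along a geometric subsequence of sample sizes and interpolating by monotonicity of $\beta_{(\cdot)}$, the Borel--Cantelli lemma forces $\beta_{|\Omega|}=O\bigl((\log|\Omega|/|\Omega|)^{1/d}\bigr)$ almost surely. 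For the lower bound, place $\asymp\mathrm{vol}(\Xi)\,r^{-d}$ disjoint balls of radius $r$ inside $\Xi$; any one of them being empty forces $\beta_{|\Omega|}\ge r$, and a first/second-moment argument for the number of empty balls (their emptiness indicators are negatively associated) shows that for $|\Omega|\,r^d\asymp\log|\Omega|$ with a sufficiently small constant this occurs almost surely for all large $|\Omega|$.

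\emph{Step 2: the sharp constants (the main obstacle).} Promoting Step~1 to the exact normalization $\kappa$ and, crucially, to the precise coefficient $d-1$ of $\log\log|\Omega|$ is the heart of the matter. Both the ``every cell hit'' sufficient condition and the disjoint-ball necessary condition are lossy by constant factors, so one must analyze the coverage event itself: approximate the number of maximal uncovered components of $\Xi\setminus\bigcup_{j}\bar B(\xi_j,t)$ by a Poisson random variable via the Chen--Stein method, in the spirit of Janson's theory of random coverings of a region by randomly placed convex sets. This produces the extreme-value (Gumbel) limit for $\kappa\,|\Omega|\,(2\beta_{|\Omega|})^d-\log|\Omega|-(d-1)\log\log|\Omega|$, from which the proposition is read off. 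The delicate points are: (i) the total-variation/clustering calibration in the Chen--Stein bound must be precise enough to separate $\log\log|\Omega|$-order terms from $O(1)$ terms; (ii) the contribution of a shrinking neighbourhood of $\partial\Xi$ must be handled separately from the interior --- the last points to be covered concentrate near the boundary, and the coefficient $d-1$ reflects the effective dimension of that locus against the count $\asymp|\Omega|/\log|\Omega|$ of competing regions; and (iii) for the almost-sure reading, the Borel--Cantelli subsequence and the interpolation between consecutive sample sizes must be fine enough that $|\Omega|(2\beta_{|\Omega|})^d$ varies by only $o(\log\log|\Omega|)$, so that the almost-sure $\limsup$ and $\liminf$ coincide.
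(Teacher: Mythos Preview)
The paper does not prove this proposition; it is quoted as a known result with citations to \cite{Liu19} and \cite{zhigljavsky2007stochastic}, and is used only as a black box in the proof of Corollary~\ref{coro:beta_converge_dist}. So there is no ``paper's own proof'' to compare against --- you are attempting to reconstruct what the cited sources contain.

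Your outline is broadly the right programme: Step~1 correctly pins the order $|\Omega|\,\beta_{|\Omega|}^d\asymp\log|\Omega|$ by the standard grid/Borel--Cantelli argument, and Step~2 correctly names the Janson-type Poisson approximation for the number of uncovered components as the route to the precise $\log\log|\Omega|$ term. However, Step~2 is a description of the obstacles, not a proof; none of the Chen--Stein calibration, the boundary analysis, or the almost-sure subsequence argument is actually carried out, and the proposition lives exactly at the resolution where those computations are the entire content. One specific concern: you attribute the coefficient $d-1$ to the $(d-1)$-dimensional boundary $\partial\Xi$, writing that ``the last points to be covered concentrate near the boundary.'' But in Janson's coverage theorem on the flat $d$-torus --- which has no boundary --- a $\log\log$ correction with a dimension-dependent coefficient already appears, coming from the scaling of the expected number of vacant components rather than from any boundary. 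Whether the $d-1$ here is an interior effect, a boundary effect, or both contributing at the same order is something you would need to check against Section~2.2 of \cite{zhigljavsky2007stochastic}; as written, your heuristic for its origin is unverified and possibly misattributed.
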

Another approach to generating samples is to choose quantization points.
\begin{proposition}\cite[Proposition 9]{Liu19} \label{prop:beta_convergence_InProbability}
 Let $\xi_1, \dots, \xi_{|\Omega|}$ be independent and identically distributed samples generalized from a probability distribution $\xi$ on the support $\Xi\subseteq \mathbb{R}^d$. Assume that
(a) $\Xi$ is bounded; and
(b) the probability 
%not necessary be nomial distribution
distribution of $\xi$ is continuous, and there exist positive constants $V$, $v$ and $\delta_0$ such that
$
\text{Prob}\left(\left\|\xi-\xi_0\right\| \leq \delta\right)>V \delta^v
$
for any fixed point $\xi_0 \in \Xi$ and $\delta\in (0,\delta_0)$.
Then for any $\varepsilon>0$, for $|\Omega|$ sufficiently large, there exist positive constants $b(\varepsilon)$ and $B(\varepsilon)$ depending on $\varepsilon$ such that
\begin{equation}\label{quantizer_beta}
\operatorname{Prob}\left(\beta_{|\Omega|} \geq \varepsilon\right) \leq B(\varepsilon) exp^{-b(\varepsilon) |\Omega|}.
\end{equation} 
\end{proposition}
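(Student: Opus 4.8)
The argument I would use is a covering-number plus union-bound estimate: once the i.i.d.\ sample $\Xi_{|\Omega|}$ forms a sufficiently fine net of $\Xi$, the covering radius $\beta_{|\Omega|}=D(\Xi,\Xi_{|\Omega|})$ is automatically small, so it suffices to control the probability that the sample \emph{fails} to be such a net.

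\emph{Geometric reduction.} Fix $\varepsilon>0$ and set $\delta:=\tfrac13\min\{\varepsilon,\delta_0\}$. Since $\Xi\subseteq\mathbb{R}^d$ is bounded with diameter at most $2M_\Xi$ (Assumption~\ref{basic_assumption}(1.1)), a maximal $\delta$-separated subset $\{y_1,\dots,y_N\}\subseteq\Xi$ is a $\delta$-net of $\Xi$, and a standard volumetric estimate (disjoint balls of radius $\delta/2$ packed inside a ball of radius $2M_\Xi+\delta/2$) gives $N\le N(\delta)\le(C_dM_\Xi/\delta)^d$ for a dimensional constant $C_d$. The key point is that if every closed ball $B(y_k,\delta):=\{\xi:\|\xi-y_k\|\le\delta\}$ contains at least one sample $\xi_{i_k}$, then for any $\xi\in\Xi$ we may choose $y_k$ with $\|\xi-y_k\|\le\delta$ and obtain $\mathrm{d}(\xi,\Xi_{|\Omega|})\le\|\xi-\xi_{i_k}\|\le\|\xi-y_k\|+\|y_k-\xi_{i_k}\|\le2\delta\le\tfrac23\varepsilon<\varepsilon$; taking the maximum over $\xi$ yields $\beta_{|\Omega|}<\varepsilon$. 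Consequently $\{\beta_{|\Omega|}\ge\varepsilon\}\subseteq\bigcup_{k=1}^{N}\{B(y_k,\delta)\cap\Xi_{|\Omega|}=\emptyset\}$.

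\emph{Probability estimate.} Each $y_k$ lies in $\Xi$ and $\delta\le\delta_0$, so hypothesis (b) gives $\text{Prob}(\|\xi-y_k\|\le\delta)>V\delta^v=:p_\varepsilon$, with $p_\varepsilon\in(0,1)$ (strictly below $1$ because the distribution is continuous). Since the samples are i.i.d., the chance that none of the $|\Omega|$ samples lands in $B(y_k,\delta)$ is at most $(1-p_\varepsilon)^{|\Omega|}\le\exp(-p_\varepsilon|\Omega|)$, and a union bound over $k=1,\dots,N$ gives
\[
\text{Prob}\big(\beta_{|\Omega|}\ge\varepsilon\big)\;\le\;N(\delta)\,(1-p_\varepsilon)^{|\Omega|}\;\le\;\Big(\tfrac{C_dM_\Xi}{\delta}\Big)^{d}\exp\!\big(-V\delta^{v}\,|\Omega|\big),
\]
which is the claimed inequality with $B(\varepsilon):=(C_dM_\Xi/\delta)^d$ and $b(\varepsilon):=V\delta^v=V\big(\tfrac13\min\{\varepsilon,\delta_0\}\big)^v$, both functions of $\varepsilon$ alone once the fixed data $M_\Xi,V,v,\delta_0,d$ are specified. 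The qualifier ``for $|\Omega|$ sufficiently large'' is only needed to make the right-hand side genuinely smaller than $1$, i.e.\ for the exponential term to dominate the polynomial prefactor; it plays no role in deriving the bound itself.

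\emph{Main obstacle.} There is no deep step here: the two ingredients are the classical volumetric covering-number bound for a bounded subset of $\mathbb{R}^d$ and the uniform small-ball lower bound, which is exactly hypothesis (b). The one point I would state explicitly is the choice of the net radius $\delta$: it must simultaneously satisfy $2\delta<\varepsilon$ (so that the triangle inequality closes the geometric reduction) and $\delta\le\delta_0$ (so that (b) applies at every net center), which is why $\delta$ is taken to be a third of $\min\{\varepsilon,\delta_0\}$. Everything else is bookkeeping.
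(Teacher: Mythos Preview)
The paper does not supply its own proof of this proposition: it is quoted from \cite[Proposition~9]{Liu19} and stated as a known result in Section~\ref{sec:known_results} without argument. Your covering-number plus union-bound proof is correct and is the standard way such exponential covering bounds are established, so there is nothing in the present paper to compare it against. One small remark: the inequality $(1-p_\varepsilon)^{|\Omega|}\le\exp(-p_\varepsilon|\Omega|)$ requires $p_\varepsilon\le 1$, but this is automatic here, since hypothesis~(b) forces $V\delta^v<\text{Prob}(\|\xi-\xi_0\|\le\delta)\le 1$ for every $\delta\in(0,\delta_0)$.
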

Proposition~\ref{beta_converge_dist} and \ref{prop:beta_convergence_InProbability} show convergence of the set $\Xi_{|\Omega|}$ to $\Xi$. Proposition~\ref{beta_converge_dist} explicitly provides a convergence rate, whereas Proposition~\ref{prop:beta_convergence_InProbability} indicates convergence in probability.

%\textcolor{blue}{Note: \cite{henrion2004holder} use an empirical measure to approximate the distribution of $\xi$. In our context, the samples are uniformly distributed or quantizers of the support $\Xi$. Although the quantizers may have a relationship with the empirical approximation measure, that relationship is unclear.}
\section{New Convergence 
 Results}\label{proof_DRO_convergence}
In this section, we state the main convergence analysis of the approximated model \eqref{dro_model_sampled} to \eqref{dro_model}.
We first show that the constraints and the optimal value of \eqref{dro_model_sampled} converge to that of \eqref{dro_model}(Proposition~\ref{convergence_chance_constraint} and Theorem~\ref{DRO_constraint}). Then we provide the convergence rate of \eqref{dro_model_sampled} to \eqref{dro_model} in Corollary~\ref{coro:beta_converge_dist} and \ref{coro:beta_convergence_InProbability} based on the two ways of generating samples from $\Xi$ as in Proposition~\ref{beta_converge_dist} and \ref{prop:beta_convergence_InProbability}.
We first give a bound on the distance between any distribution $\mathbb{P}$ in the ambiguity set $\mathcal{P}$ and its discretization in $\mathcal{P}_{|\Omega|}$.

\begin{proposition}\label{ambiguity_set_convergence}
For any $\mathbb{P}\in\mathcal{P}$, let $\mathbb{P}_{|\Omega|}$ be a solution of $\text{min}_{\mathbb{Q}\in\mathcal{P}_{|\Omega|}}\rho(\mathbb{P},\mathbb{Q})$. Then $\rho(\mathbb{P},\mathbb{P}_{|\Omega|})\leq C^H\beta_{|\Omega|}$.
\end{proposition}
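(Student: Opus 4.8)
The plan is to derive the bound directly from Assumption~\ref{ambiguity_assumption} by unpacking the definition of the Hausdorff metric in \eqref{define_distanceSet}. First I would observe that $\mathrm{H}(\mathcal{P}_{|\Omega|},\mathcal{P})$ is, by construction, the maximum of two directed distances, so in particular it dominates the one that controls how far each member of $\mathcal{P}$ sits from the discretized set:
\[
\max_{\mathbb{P}\in\mathcal{P}}\mathrm{d}(\mathbb{P},\mathcal{P}_{|\Omega|})\;\le\;\mathrm{H}(\mathcal{P}_{|\Omega|},\mathcal{P}).
\]
Combining this with the inequality $\mathrm{H}(\mathcal{P}_{|\Omega|},\mathcal{P})\le C^H\beta_{|\Omega|}$ granted by Assumption~\ref{ambiguity_assumption} yields $\mathrm{d}(\mathbb{P},\mathcal{P}_{|\Omega|})\le C^H\beta_{|\Omega|}$ for every $\mathbb{P}\in\mathcal{P}$.

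Next I would translate $\mathrm{d}(\mathbb{P},\mathcal{P}_{|\Omega|})$ into a statement about $\rho(\mathbb{P},\mathbb{P}_{|\Omega|})$. By \eqref{define_distancePSet} we have $\mathrm{d}(\mathbb{P},\mathcal{P}_{|\Omega|})=\min_{\mathbb{Q}\in\mathcal{P}_{|\Omega|}}\rho(\mathbb{P},\mathbb{Q})$, and since $\mathbb{P}_{|\Omega|}$ is by hypothesis a minimizer of this problem, $\rho(\mathbb{P},\mathbb{P}_{|\Omega|})=\mathrm{d}(\mathbb{P},\mathcal{P}_{|\Omega|})$. Chaining the two relations gives $\rho(\mathbb{P},\mathbb{P}_{|\Omega|})\le C^H\beta_{|\Omega|}$, which is the claim.

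In short, the proposition is a reformulation of one half of Assumption~\ref{ambiguity_assumption} in terms of the Kantorovich metric $\rho$, so I do not expect a genuine obstacle; the only points requiring care are (i) selecting the correct directed component of $\mathrm{H}$ — the one measuring the distance from each $\mathbb{P}\in\mathcal{P}$ to $\mathcal{P}_{|\Omega|}$, not the reverse direction — and (ii) noting that the minimum defining $\mathrm{d}(\mathbb{P},\mathcal{P}_{|\Omega|})$ is attained at $\mathbb{P}_{|\Omega|}$, which is exactly the hypothesis that $\mathbb{P}_{|\Omega|}$ solves $\min_{\mathbb{Q}\in\mathcal{P}_{|\Omega|}}\rho(\mathbb{P},\mathbb{Q})$.
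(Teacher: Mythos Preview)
Your proposal is correct and follows essentially the same approach as the paper: extract the directed distance $\mathrm{d}(\mathbb{P},\mathcal{P}_{|\Omega|})\le C^H\beta_{|\Omega|}$ from Assumption~\ref{ambiguity_assumption} via \eqref{define_distanceSet}, then identify $\mathrm{d}(\mathbb{P},\mathcal{P}_{|\Omega|})$ with $\rho(\mathbb{P},\mathbb{P}_{|\Omega|})$ using \eqref{define_distancePSet} and the fact that $\mathbb{P}_{|\Omega|}$ is the minimizer. The paper's proof is the same two-line argument, just stated more tersely.
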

\begin{proof}
    Assumption~\ref{ambiguity_assumption} implies that  $\forall\mathbb{P}\in\mathcal{P}$,
$\mathrm{d}\left(\mathbb{P}, \mathcal{P}_{|\Omega|}\right)\leq C^H\beta_{|\Omega|}$.
%Let $\mathbb{P}_{|\Omega|}=\text{argmin}_{\mathbb{Q}\in\mathcal{P}_{|\Omega|}}\rho(\mathbb{P},\mathbb{Q})$. 
Then from \eqref{define_distancePSet},
$\rho(\mathbb{P},\mathbb{P}_{|\Omega|})=\min_{\mathbb{Q}\in\mathcal{P}_{|\Omega|}}\rho(\mathbb{P},\mathbb{Q})=\mathrm{d}\left(\mathbb{P}, \mathcal{P}_{|\Omega|}\right)\leq C^H\beta_{|\Omega|}$.
$~\Box$
\end{proof}

Based on the result of Proposition~\ref{ambiguity_set_convergence}, we now show the convergence of the constraint satisfaction of \eqref{dro_model_sampled} to that of  \eqref{dro_model} for any distribution $\mathbb{P}\in\mathcal{P}$.

\begin{proposition}\label{convergence_chance_constraint} 
Let $\mathbb{P} \in \mathcal{P}$,  $\mathbb{P}_{|\Omega|}$ be a solution of $\text{min}_{\mathbb{Q}\in\mathcal{P}_{|\Omega|}}\rho(\mathbb{P},\mathbb{Q})$, and consider  \eqref{dro_model} and \eqref{dro_model_sampled}.
Then for a given $x$,
%for $|\Omega|$ sufficiently large, %there exists an $|\Omega|_0$, such that when $|\Omega|\geq |\Omega|_0$, 
 $$\left|\text{Prob}_\mathbb{P}(G(x,\xi)\leq 0)-\text{Prob}_{\mathbb{P}_{|\Omega|}}(G(x,\xi)\leq 0)\right|
 \leq \sqrt{2\kappa^GC^\mathcal{P}C^H\beta_{|\Omega|}}.$$
\end{proposition}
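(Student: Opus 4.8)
The plan is to smooth the discontinuous indicator defining the chance constraint by Lipschitz surrogates, transfer between $\mathbb{P}$ and $\mathbb{P}_{|\Omega|}$ through the Kantorovich bound of Proposition~\ref{ambiguity_set_convergence}, absorb the smoothing error using the bounded density in Assumption~\ref{basic_assumption}, and finally tune the smoothing width. First I would reduce to a scalar random variable: since $G(x,\xi)\le 0$ (componentwise) is equivalent to $\widetilde G(x,\xi):=\max_{i=1,\dots,m}G_i(x,\xi)\le 0$, and a finite maximum of functions each $\kappa^G$-Lipschitz in $\xi$ (Assumption~\ref{basic_assumption}, item~(1.5)) is again $\kappa^G$-Lipschitz in $\xi$, it suffices to bound $|\text{Prob}_\mathbb{P}(\widetilde G(x,\xi)\le 0)-\text{Prob}_{\mathbb{P}_{|\Omega|}}(\widetilde G(x,\xi)\le 0)|$ with $\widetilde G(x,\cdot):\Xi\to\mathbb{R}$ scalar and $\kappa^G$-Lipschitz, writing each probability as $\mathbb{E}[\mathbbm{1}_{(-\infty,0]}(\widetilde G(x,\xi))]$. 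The obstruction is that $\mathbbm{1}_{(-\infty,0]}$ is not Lipschitz, so the bound $\rho(\mathbb{P},\mathbb{P}_{|\Omega|})\le C^H\beta_{|\Omega|}$ does not apply to it directly; and $\mathbb{P}_{|\Omega|}$ is atomic, so no slab probability under $\mathbb{P}_{|\Omega|}$ can be controlled by a density.

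For $\delta>0$ I would introduce the two \emph{one-sided} piecewise-linear surrogates $g_\delta\le\mathbbm{1}_{(-\infty,0]}\le h_\delta$, where $h_\delta\equiv 1$ on $(-\infty,0]$, decreases linearly to $0$ on $[0,\delta]$, and $h_\delta\equiv 0$ on $[\delta,\infty)$, and $g_\delta(t):=h_\delta(t+\delta)$; each is $(1/\delta)$-Lipschitz, so $h_\delta\circ\widetilde G$ and $g_\delta\circ\widetilde G$ are $(\kappa^G/\delta)$-Lipschitz on $\Xi$, hence $\tfrac{\delta}{\kappa^G}\,h_\delta\circ\widetilde G,\ \tfrac{\delta}{\kappa^G}\,g_\delta\circ\widetilde G\in\mathcal{G}$. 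For the first direction I would chain $\text{Prob}_{\mathbb{P}_{|\Omega|}}(\widetilde G\le 0)\le\mathbb{E}_{\mathbb{P}_{|\Omega|}}[h_\delta(\widetilde G)]\le\mathbb{E}_{\mathbb{P}}[h_\delta(\widetilde G)]+\tfrac{\kappa^G}{\delta}\rho(\mathbb{P},\mathbb{P}_{|\Omega|})\le\mathbb{E}_{\mathbb{P}}[h_\delta(\widetilde G)]+\tfrac{\kappa^G C^H\beta_{|\Omega|}}{\delta}$ by Proposition~\ref{ambiguity_set_convergence}, and then, since $h_\delta-\mathbbm{1}_{(-\infty,0]}$ equals $1-t/\delta$ on $(0,\delta]$ and vanishes elsewhere, the density bound (Assumption~\ref{basic_assumption}, item~(1.7)) gives $\mathbb{E}_{\mathbb{P}}[h_\delta(\widetilde G)]-\text{Prob}_\mathbb{P}(\widetilde G\le 0)=\int_0^\delta(1-t/\delta)f_\mathbb{P}(t)\,dt\le C^\mathcal{P}\!\int_0^\delta(1-t/\delta)\,dt=\tfrac12 C^\mathcal{P}\delta$. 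The opposite direction is symmetric using $g_\delta$: $\text{Prob}_\mathbb{P}(\widetilde G\le 0)\le\mathbb{E}_\mathbb{P}[g_\delta(\widetilde G)]+\tfrac12 C^\mathcal{P}\delta\le\mathbb{E}_{\mathbb{P}_{|\Omega|}}[g_\delta(\widetilde G)]+\tfrac{\kappa^G C^H\beta_{|\Omega|}}{\delta}+\tfrac12 C^\mathcal{P}\delta\le\text{Prob}_{\mathbb{P}_{|\Omega|}}(\widetilde G\le 0)+\tfrac{\kappa^G C^H\beta_{|\Omega|}}{\delta}+\tfrac12 C^\mathcal{P}\delta$, the last step because $g_\delta\le\mathbbm{1}_{(-\infty,0]}$. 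It is essential here that the density bound is invoked only under $\mathbb{P}$ and never under the atomic $\mathbb{P}_{|\Omega|}$, which is exactly what the one-sided choice of $g_\delta,h_\delta$ arranges.

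Combining the two directions yields $|\text{Prob}_\mathbb{P}(\widetilde G\le 0)-\text{Prob}_{\mathbb{P}_{|\Omega|}}(\widetilde G\le 0)|\le\tfrac12 C^\mathcal{P}\delta+\tfrac{\kappa^G C^H\beta_{|\Omega|}}{\delta}$ for every $\delta>0$; minimizing the right-hand side over $\delta$ (AM--GM, minimizer $\delta^\ast=\sqrt{2\kappa^G C^H\beta_{|\Omega|}/C^\mathcal{P}}$) gives exactly $\sqrt{2\kappa^G C^\mathcal{P}C^H\beta_{|\Omega|}}$. I expect the main difficulty to be conceptual rather than computational: designing the surrogates and ordering the estimates so the discontinuity of the indicator is always absorbed on the side of the absolutely continuous distribution $\mathbb{P}$, and noticing that integrating the triangular surrogate gap against the density produces the sharp constant $\tfrac12$ (and hence the $\sqrt{2}$, rather than $2$, in the stated bound). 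A minor loose end is the case where $\mathbb{P}\in\mathcal{P}$ is itself discrete; there one argues separately (the relevant slab probabilities are then trivially controlled, or one uses that the closest $\mathbb{P}_{|\Omega|}$ already matches $\mathbb{P}$).
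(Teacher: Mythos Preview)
Your proposal is correct and follows essentially the same route as the paper: the paper likewise sandwiches the indicator by the one-sided piecewise-linear surrogates $L_{\epsilon}$ and $L_{-\epsilon}$ (your $h_\delta$ and $g_\delta$), invokes Proposition~\ref{ambiguity_set_convergence} to transfer the Lipschitz surrogate between $\mathbb{P}$ and $\mathbb{P}_{|\Omega|}$, bounds the smoothing gap $\mathbb{E}_\mathbb{P}[L_\epsilon(G)]-\mathbb{E}_\mathbb{P}[\mathbbm{1}_{(-\infty,0]}(G)]\le C^{\mathcal P}\epsilon/2$ via the density bound, and optimizes at $\epsilon=\sqrt{2\kappa^G C^H\beta_{|\Omega|}/C^{\mathcal P}}$. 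Your explicit scalarization $\widetilde G=\max_i G_i$ is a helpful clarification that the paper leaves implicit (it writes $L_\epsilon$ as a scalar function but applies it to $G\in\mathbb{R}^m$), and your remark that the density bound is invoked only under the continuous $\mathbb{P}$ is exactly the point.
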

\begin{proof}
    For given constants $\kappa^L$ and $M^L$, let the set of all bounded and Lipshitz continuous functions be $\mathcal{L}=\{L\mid
||L(y')-L(y'')||\leq \kappa^L||y'-y''||,\; ||L(y')||\leq M^L,\; \;\forall y',y''\in \mathbb{R}^m,\;\; M^L, \kappa^L<\infty\}$. For any $L(\cdot)\in\mathcal{L}$ $x\in\mathcal{X}$, and $\xi',\xi''\in \Xi$:
\begin{equation}
\begin{aligned}
 &||L(G(x, \xi')) - L(G(x, \xi''))||
 \leq \kappa^L ||G(x, \xi')-G(x, \xi'')||
 \leq \kappa^L\kappa^G ||\xi'-\xi''||.
\end{aligned} 
\end{equation}
Hence, $L(G(x,\xi))$ is Lipschitz continuous on $\xi$ for the given $x$. 
Let $\mathcal{G}':=\{g(\cdot)=\frac{1}{\kappa^L\kappa^G}L(G(x,\cdot)),x\in\mathcal{X}\}\subseteq \mathcal{G}$. Then for any $L(\cdot)\in\mathcal{L}$ and $x\in\mathcal{X}$, according to Proposition~\ref{ambiguity_set_convergence},
\begin{equation}\label{lipschitz_bound}
\begin{aligned}
 &\quad \left|\mathbb{E}_{\mathbb{P}}\left[L(G(x, \xi))\right]-\mathbb{E}_{\mathbb{P}_{|\Omega|}}\left[L(G(x, \xi))\right]\right|\\
 & \leq \kappa^L\kappa^G\sup_{g \in \mathcal{G}'}\left|\mathbb{E}_{\mathbb{P}}[g(\xi)]-\mathbb{E}_{\mathbb{P}_{|\Omega|}}[g(\xi)]\right|\\
 &\leq \kappa^L\kappa^G\sup_{g \in \mathcal{G}}\left|\mathbb{E}_{\mathbb{P}}[g(\xi)]-\mathbb{E}_{\mathbb{P}_{|\Omega|}}[g(\xi)]\right|\\
 &=\kappa^L\kappa^G\rho(\mathbb{P},\mathbb{P}_{|\Omega|})
 \leq \kappa^L\kappa^GC^H\beta_{|\Omega|}.   
\end{aligned}
\end{equation}

The rest of the proof is similar to the proof of Proposition 1.2 in \cite{Ross11} (see also  \cite[Definition 4.1]{sun2019convergence}). We present it here for completeness. Let us consider $L_\epsilon(\cdot)\in\mathcal{L}$ to be:
\begin{equation}
    L_{\epsilon}(y)=\left\{\begin{array}{cc}
1 & y\leq 0 \\
1-y/\epsilon & 0<y\leq \epsilon\\
0 & y> \epsilon.
\end{array}\right.
\end{equation}
Note that for $L_{\epsilon}(\cdot)$, $\kappa^L=1/\epsilon$ and $M^L=1$. Since $L_{\epsilon}(G(x,\xi))\geq \mathbbm{1}_{(-\infty,0]}(G(x,\xi))$ for any $x\in\mathcal{X}$ and $\xi\in\Xi$,  we have  
\begin{equation}\label{epsilon_largerbound}
\mathbb{E}_{\mathbb{P}_{|\Omega|}}\left[L_\epsilon\left(G(x, \xi)\right)\right]\geq \mathbb{E}_{\mathbb{P}_{|\Omega|}}\left[\mathbbm{1}_{(-\infty,0]}\left(G(x, \xi)\right)\right].
\end{equation}
Also, because the probability density $f_\mathbb{P}(\cdot)$ of $\mathbb{P}$ is bounded by $C^\mathcal{P}$, we have 
\begin{equation}\label{epsilon_proabilityBound}
\begin{aligned}
 &\mathbb{E}_{\mathbb{P}}\left[L_\epsilon\left(G(x, \xi)\right)\right]-\mathbb{E}_{\mathbb{P}}\left[\mathbbm{1}_{(-\infty,0]}\left(G(x, \xi)\right)\right]
 =\int_0^\epsilon f_\mathbb{P}(y)\cdot\frac{y}{\epsilon}dy\leq \frac{C^\mathcal{P}}{\epsilon}\int_0^\epsilon ydy=\frac{C^\mathcal{P}\epsilon}{2}.   
\end{aligned}    
\end{equation}
Then we have the following inequalities:
\begin{equation}\label{epsilon_bound1}
    \begin{aligned}
&\mathbb{E}_{\mathbb{P}_{|\Omega|}}\left[\mathbbm{1}_{(-\infty,0]}\left(G(x, \xi)\right)\right]-\mathbb{E}_{\mathbb{P}}\left[\mathbbm{1}_{(-\infty,0]}\left(G(x, \xi)\right)\right]\\
=\;& \mathbb{E}_{\mathbb{P}_{|\Omega|}}\left[\mathbbm{1}_{(-\infty,0]}\left(G(x, \xi)\right)\right]-\mathbb{E}_{\mathbb{P}}\left[L_\epsilon\left(G(x, \xi)\right)\right]+\mathbb{E}_{\mathbb{P}}\left[L_\epsilon\left(G(x, \xi)\right)\right]-\mathbb{E}_{\mathbb{P}}\left[\mathbbm{1}_{(-\infty,0]}\left(G(x, \xi)\right)\right]\\
\leq\;& \mathbb{E}_{\mathbb{P}_{|\Omega|}}\left[L_\epsilon\left(G(x, \xi)\right)\right]-\mathbb{E}_{\mathbb{P}}\left[L_\epsilon\left(G(x, \xi)\right)\right]+\mathbb{E}_{\mathbb{P}}\left[L_{\epsilon}\left(G(x, \xi)\right)\right]-\mathbb{E}_{\mathbb{P}}\left[\mathbbm{1}_{(-\infty,0]}\left(G(x, \xi)\right)\right] 
\text{(Applying \eqref{epsilon_largerbound})}\\
\leq \;& \frac{\kappa^GC^H\beta_{|\Omega|}}{\epsilon}+\frac{C^\mathcal{P}\epsilon}{2}.
\;\qquad\qquad\qquad \text{(Applying \eqref{lipschitz_bound} and \eqref{epsilon_proabilityBound})}
    \end{aligned}\nonumber
\end{equation}
For $\epsilon=\sqrt{2\kappa^GC^H\beta_{|\Omega|}/{C^\mathcal{P}}}$, it gives 
%\eqref{epsilon_bound1} gives 
\begin{equation}
\begin{aligned}
& \mathbb{E}_{\mathbb{P}}\left[\mathbbm{1}_{(-\infty,0]}\left(G(x, \xi)\right)\right]-\mathbb{E}_{\mathbb{P}_{|\Omega|}}\left[\mathbbm{1}_{(-\infty,0]}\left(G(x, \xi)\right)\right]
 \leq \sqrt{2\kappa^GC^\mathcal{P}C^H\beta_{|\Omega|}}.   
\end{aligned}
\end{equation} 

As for the lower bound, we have the function $L_{-\epsilon}(\cdot)\in\mathcal{L}$: \begin{equation}
    L_{-\epsilon}(y)=\left\{\begin{array}{cc}
1 & y\leq -\epsilon \\
1-y/\epsilon & -\epsilon<y\leq 0\\
0 & y> 0.
\end{array}\right.
\end{equation}
For $L_{-\epsilon}(\cdot)$, $\kappa^L=1/\epsilon$ and $M^L=1$. 
Similarly, when we take $\epsilon=\sqrt{2\kappa^GC^H\beta_{|\Omega|}/{C^\mathcal{P}}}$, we have the lower bound:
\begin{equation}
\begin{aligned}
  & \mathbb{E}_{\mathbb{P}}\left[\mathbbm{1}_{(-\infty,0]}\left(G(x, \xi)\right)\right]-\mathbb{E}_{\mathbb{P}_{|\Omega|}}\left[\mathbbm{1}_{(-\infty,0]}\left(G(x, \xi)\right)\right]
   \geq -\sqrt{2\kappa^GC^\mathcal{P}C^H\beta_{|\Omega|}}. 
\end{aligned}
\end{equation}

Therefore, we can conclude that 
\begin{equation}
\begin{aligned}
  & \left|\mathbb{E}_{\mathbb{P}}\left[\mathbbm{1}_{(-\infty,0]}\left(G(x, \xi)\right)\right]-\mathbb{E}_{\mathbb{P}_{|\Omega|}}\left[\mathbbm{1}_{(-\infty,0]}\left(G(x, \xi)\right)\right]\right|
   \leq  \sqrt{2\kappa^GC^\mathcal{P}C^H\beta_{|\Omega|}}, 
\end{aligned}
\end{equation}
which is equivalent to 
\begin{equation}
\begin{aligned}
 &\left|\text{Prob}_\mathbb{P}(G(x,\xi)\leq 0)-\text{Prob}_{\mathbb{P}_{|\Omega|}}(G(x,\xi)\leq 0)\right|
 \leq  \sqrt{2\kappa^GC^\mathcal{P}C^H\beta_{|\Omega|}}.   
\end{aligned}  
\end{equation}
$~\Box$
\end{proof}
Note that the convergence result in Proposition~\ref{convergence_chance_constraint} is different from the stability results derived by \cite{henrion1999metric,henrion2004holder}. These papers discussed the stability of the set of solutions of the chance constraints to its perturbation. However, we focus on the convergence of the chance constraints with a distribution of $\xi$ and its discrete approximation for a given $x$.

We now show that the optimal value of \eqref{dro_model_sampled} converges to that of \eqref{dro_model}.
\begin{theorem}\label{DRO_constraint}
Assume that the solution sets of \eqref{dro_model} and \eqref{dro_model_sampled} are nonempty and compact with finite optimal values. For a given $\Xi_{|\Omega|}$ with sufficiently large $|\Omega|$, let $(\hat{x},\hat{\mathbb{P}}_{|\Omega|})$ be an optimal solution of \eqref{dro_model_sampled} and $\hat{v}_{|\Omega|}$ be the corresponding optimal value. Let $(x^*,\mathbb{P}^*)$ be an optimal solution of \eqref{dro_model} and $v^*$ be the corresponding optimal value. 
Then 
%$\hat{x}$ satisfies $\text{Prob}_\mathbb{P}(G(\hat{x},\xi)\leq 0)\geq 1-\theta - \sqrt{2\kappa^GC^\mathcal{P}C^H\beta_{|\Omega|}}$ and 
$|\hat{v}_{|\Omega|}-v^*|\leq \kappa^FC^H \beta_{|\Omega|}+\kappa^\theta\sqrt{2\kappa^GC^\mathcal{P}C^H\beta_{|\Omega|}}$.
\end{theorem}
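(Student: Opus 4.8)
Abbreviate $\delta_{|\Omega|}:=\sqrt{2\kappa^G C^{\mathcal P}C^H\beta_{|\Omega|}}$, the uniform bound on the chance-constraint probability perturbation supplied by Proposition~\ref{convergence_chance_constraint}. The plan is to prove the two one-sided estimates $v^*-\hat v_{|\Omega|}\le\kappa^F C^H\beta_{|\Omega|}+\kappa^\theta\delta_{|\Omega|}$ and $\hat v_{|\Omega|}-v^*\le\kappa^F C^H\beta_{|\Omega|}+\kappa^\theta\delta_{|\Omega|}$, and add them. My first move is to write \eqref{dro_model} and \eqref{dro_model_sampled} through the single-distribution chance-constrained values $v_{\mathbb Q}(\cdot)$ of condition~(1.4) of Assumption~\ref{basic_assumption}: namely $v^*=\max_{\mathbb P\in\mathcal P}v_{\mathbb P}(\theta)$ and $\hat v_{|\Omega|}=\max_{\mathbb P_{|\Omega|}\in\mathcal P_{|\Omega|}}v_{\mathbb P_{|\Omega|}}(\theta)$, with each $v_{\mathbb Q}(\cdot)$ well defined for $|\Omega|$ large (by the discussion after \eqref{dro_model_sampled}) and $\kappa^\theta$-Lipschitz near $\theta$. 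The three tools I would combine are: (i) Proposition~\ref{ambiguity_set_convergence} and the matching half of the Hausdorff bound of Assumption~\ref{ambiguity_assumption}, which pair any distribution in one of the two ambiguity sets with a distribution in the other at Kantorovich distance $\le C^H\beta_{|\Omega|}$; (ii) the Kantorovich bound \eqref{define_distanceP}, which for the $\kappa^F$-Lipschitz map $\xi\mapsto F(x,\xi)$ gives $|\mathbb E_{\mathbb Q}[F(x,\xi)]-\mathbb E_{\mathbb Q'}[F(x,\xi)]|\le\kappa^F\rho(\mathbb Q,\mathbb Q')\le\kappa^F C^H\beta_{|\Omega|}$; and (iii) Proposition~\ref{convergence_chance_constraint}, which turns such a pairing into a $\delta_{|\Omega|}$-shift of the safety level for any fixed $x$.

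I describe the estimate $v^*-\hat v_{|\Omega|}\le\kappa^F C^H\beta_{|\Omega|}+\kappa^\theta\delta_{|\Omega|}$. Let $\mathbb P^*\in\mathcal P$ attain $v^*$, so $v^*=v_{\mathbb P^*}(\theta)$, and use (i) to pick $\mathbb P^*_{|\Omega|}\in\mathcal P_{|\Omega|}$ with $\rho(\mathbb P^*,\mathbb P^*_{|\Omega|})\le C^H\beta_{|\Omega|}$; note $v_{\mathbb P^*_{|\Omega|}}(\theta)\le\hat v_{|\Omega|}$ since $\mathbb P^*_{|\Omega|}\in\mathcal P_{|\Omega|}$. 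Let $\bar x$ be a minimizer of the subproblem defining $v_{\mathbb P^*_{|\Omega|}}(\theta)$, so $\text{Prob}_{\mathbb P^*_{|\Omega|}}(G(\bar x,\xi)\le0)\ge1-\theta$ and $\mathbb E_{\mathbb P^*_{|\Omega|}}[F(\bar x,\xi)]=v_{\mathbb P^*_{|\Omega|}}(\theta)$. Proposition~\ref{convergence_chance_constraint} applied at $\bar x$ gives $\text{Prob}_{\mathbb P^*}(G(\bar x,\xi)\le0)\ge1-\theta-\delta_{|\Omega|}$, so $\bar x$ is feasible for the $\mathbb P^*$-subproblem at the relaxed level $1-(\theta+\delta_{|\Omega|})$, whence $v_{\mathbb P^*}(\theta+\delta_{|\Omega|})\le\mathbb E_{\mathbb P^*}[F(\bar x,\xi)]$; by (ii) the right-hand side is $\le\mathbb E_{\mathbb P^*_{|\Omega|}}[F(\bar x,\xi)]+\kappa^F C^H\beta_{|\Omega|}=v_{\mathbb P^*_{|\Omega|}}(\theta)+\kappa^F C^H\beta_{|\Omega|}$; and the $\kappa^\theta$-Lipschitz continuity of $v_{\mathbb P^*}(\cdot)$ gives $v_{\mathbb P^*}(\theta)\le v_{\mathbb P^*}(\theta+\delta_{|\Omega|})+\kappa^\theta\delta_{|\Omega|}$. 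Chaining the last three displays yields $v^*=v_{\mathbb P^*}(\theta)\le v_{\mathbb P^*_{|\Omega|}}(\theta)+\kappa^F C^H\beta_{|\Omega|}+\kappa^\theta\delta_{|\Omega|}\le\hat v_{|\Omega|}+\kappa^F C^H\beta_{|\Omega|}+\kappa^\theta\delta_{|\Omega|}$.

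The reverse estimate is the same argument with $(\mathbb P^*,\mathcal P,\mathcal P_{|\Omega|})$ replaced by $(\hat{\mathbb P}_{|\Omega|},\mathcal P_{|\Omega|},\mathcal P)$: start from the discretized worst-case distribution $\hat{\mathbb P}_{|\Omega|}$ attaining $\hat v_{|\Omega|}=v_{\hat{\mathbb P}_{|\Omega|}}(\theta)$, use the other half of the Hausdorff bound to pick a close $\mathbb P\in\mathcal P$ (so $v_{\mathbb P}(\theta)\le v^*$), take a minimizer of the $\mathbb P$-subproblem, transfer its chance-constraint satisfaction to $\hat{\mathbb P}_{|\Omega|}$ via Proposition~\ref{convergence_chance_constraint}, and close with (ii) and the $\kappa^\theta$-Lipschitz continuity of $v_{\hat{\mathbb P}_{|\Omega|}}(\cdot)$. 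Adding the two estimates gives $|\hat v_{|\Omega|}-v^*|\le\kappa^F C^H\beta_{|\Omega|}+\kappa^\theta\delta_{|\Omega|}$, which is the claim. The step I expect to require the most care is the feasibility transfer together with its bookkeeping: one has to relax the safety level in the correct direction, and choose which of the two models supplies the ``anchor'' distribution and which supplies the subproblem minimizer, so that each inequality points the intended way; moreover, since the objective of \eqref{dro_model} is itself a worst case over $\mathcal P$, the argument must never leave the level of the fixed-distribution values $v_{\mathbb Q}(\theta)$, and the subproblems at the perturbed level $1-(\theta+\delta_{|\Omega|})$ must stay solvable, which is exactly what the ``sufficiently large $|\Omega|$'' hypothesis secures.
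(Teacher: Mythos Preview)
Your forward estimate $v^*-\hat v_{|\Omega|}\le\kappa^F C^H\beta_{|\Omega|}+\kappa^\theta\delta_{|\Omega|}$ is argued exactly as in the paper: project $\mathbb P^*$ onto $\mathcal P_{|\Omega|}$ to obtain $\mathbb P^*_{|\Omega|}$, take the minimizer $\bar x$ of $v_{\mathbb P^*_{|\Omega|}}(\theta)$, push its chance-constraint feasibility back to $\mathbb P^*$ at the relaxed level $1-\theta-\delta_{|\Omega|}$ via Proposition~\ref{convergence_chance_constraint}, invoke the $\kappa^\theta$-Lipschitz bound in $\theta$, and close with the Kantorovich inequality for $F$. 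Your chain $v^*=v_{\mathbb P^*}(\theta)\le v_{\mathbb P^*}(\theta+\delta_{|\Omega|})+\kappa^\theta\delta_{|\Omega|}\le\mathbb E_{\mathbb P^*}[F(\bar x,\xi)]+\kappa^\theta\delta_{|\Omega|}$ and $\hat v_{|\Omega|}\ge v_{\mathbb P^*_{|\Omega|}}(\theta)=\mathbb E_{\mathbb P^*_{|\Omega|}}[F(\bar x,\xi)]$ matches the paper's displays \eqref{dro_converge_const}--\eqref{dro_converge_const2} line for line.

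The only divergence is in the reverse direction. The paper does not run the symmetric perturbation argument: it simply invokes the standing hypothesis $\mathcal P_{|\Omega|}\subset\mathcal P$ (stated in Section~\ref{subsec:stoch_model}), which immediately gives $q_{|\Omega|}(x)\le q(x)$ for every $x$ and hence $0\le v^*-\hat v_{|\Omega|}$. Your symmetric argument is not wrong, but in this setup it collapses to the same observation, since the ``other half'' of the Hausdorff bound is $\max_{\mathbb P_{|\Omega|}\in\mathcal P_{|\Omega|}}\mathrm d(\mathbb P_{|\Omega|},\mathcal P)=0$ and the nearest $\mathbb P\in\mathcal P$ to $\hat{\mathbb P}_{|\Omega|}$ is $\hat{\mathbb P}_{|\Omega|}$ itself. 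So your proposal is correct and essentially the paper's proof; you can drop the reverse perturbation step and replace it by the one-line containment argument.
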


\begin{proof}
    From Proposition~\ref{convergence_chance_constraint}, let $\mathbb{P}_{|\Omega|}^*$ be a solution of $\text{min}_{\mathbb{Q}\in\mathcal{P}_{|\Omega|}}\rho(\mathbb{P}^*,\mathbb{Q})$. If $x$ satisfies $ \text{Prob}_{\mathbb{P}_{|\Omega|}^*}(G(x,\xi)\leq 0)\geq 1-\theta$, then we have   
\begin{equation}
\begin{aligned}
1-\theta\leq & \; \text{Prob}_{\mathbb{P}_{|\Omega|}^*}(G(x,\xi)\leq 0)
  \leq  \; \text{Prob}_{\mathbb{P}^*}(G(x,\xi)\leq 0)+\sqrt{2\kappa^GC^\mathcal{P}C^H\beta_{|\Omega|}}.  
\end{aligned}
\end{equation}
Therefore,
\begin{equation}
\begin{aligned}\label{constraint_feasible_x}
  \text{Prob}_{\mathbb{P}^*}(G(x,\xi)\leq  0)\geq 1-\theta-\sqrt{2\kappa^GC^\mathcal{P}C^H\beta_{|\Omega|}}.
  \end{aligned}
\end{equation}

%We now discuss the convergence of the optimal values.
%For an optimal solution $x^*$, $\mathbb{P}^*$ and the corresponding optimal value $v^*$ of \eqref{dro_model},
Let $\epsilon=\sqrt{2\kappa^GC^\mathcal{P}C^H\beta_{|\Omega|}}$. Let $x'$ be the optimal solution of $v_{\mathbb{P}_{|\Omega|}^*}(\theta)$ and then $x'$ is feasible to $v_{\mathbb{P}^*}(\theta+\epsilon)$ by \eqref{constraint_feasible_x}.  Then by Assumption~\ref{basic_assumption}.4,
we have 
\begin{equation}\label{dro_converge_const}
\begin{aligned}
v^* =v_{\mathbb{P}^*}(\theta)\leq  v_{\mathbb{P}^*}(\theta+\epsilon)+\kappa^\theta\epsilon\leq \mathbb{E}_{\mathbb{P}^* }[F(x',\xi)] +\kappa^\theta\epsilon.
\end{aligned}
\end{equation}
For \eqref{dro_model_sampled}, we have 
\begin{equation}\label{dro_converge_const2}
\begin{aligned}
%\hat{v}_{|\Omega|} = \max_{\mathbb{P}_{|\Omega|}\in \mathcal{P}_{|\Omega|}}\mathbb{E}_{\mathbb{P}_{|\Omega|} }[F(\hat{x},\xi)]
\hat{v}_{|\Omega|} = v_{\hat{\mathbb{P}}_{|\Omega|}}(\theta)\geq v_{\mathbb{P}_{|\Omega|}^*}(\theta)=\mathbb{E}_{\mathbb{P}_{|\Omega|}^* }[F(x',\xi)].
\end{aligned}
\end{equation}
The following proof is similar to that of \cite[Theorem 14]{Liu19}. Let $\mathcal{G}'':=\{g(\cdot)=\frac{1}{\kappa^F}F(x,\cdot),x\in\mathcal{X}\}\subseteq \mathcal{G}$. Since $\mathcal{P}_{|\Omega|}\subseteq\mathcal{P}$, we have:
\begin{equation}
\begin{aligned}
0\leq &\; v^*-\hat{v}_{|\Omega|}\\
%= \; \textcolor{blue}{ v_{\mathbb{P}^*}(\theta)- v_{\hat{\mathbb{P}}_{|\Omega|}}(\theta)}\\
%\leq &\; \textcolor{blue}{v_{\mathbb{P}^*}(\theta+\epsilon)-v_{\mathbb{P}_{|\Omega|}^*}(\theta)+\kappa^\theta\epsilon} \\
\leq & \; \mathbb{E}_{\mathbb{P}^* }[F(x',\xi)]- \mathbb{E}_{\mathbb{P}_{|\Omega|}^* }[F(x',\xi)]+\kappa^\theta\epsilon \qquad \text{(Applying \eqref{dro_converge_const} and \eqref{dro_converge_const2})}\\
 \leq &\; \max_{\mathbb{P}\in \mathcal{P}}\min_{\mathbb{P}_{|\Omega|}\in \mathcal{P}_{|\Omega|}}\; \left(\mathbb{E}_{\mathbb{P} }[F(x',\xi)]- \mathbb{E}_{\mathbb{P}_{|\Omega|} }[F(x',\xi)]\right)+\kappa^\theta\epsilon\\ %&&\text{(Reorganizing the formula)}\\
% \leq & \;\max_{\mathbb{P}\in \mathcal{P}}\min_{\mathbb{P}_{|\Omega|}\in \mathcal{P}_{|\Omega|}}\max_{x\in\mathcal{X}}\; \left|\mathbb{E}_{\mathbb{P}_{|\Omega|} }[F(x,\xi)]- \mathbb{E}_{\mathbb{P} }[F(x,\xi)]\right|\\%&&\text{(Taking the maximum on $x$)}\\
     \leq  &\; \kappa^F\max_{\mathbb{P}\in \mathcal{P}}\min_{\mathbb{P}_{|\Omega|}\in \mathcal{P}_{|\Omega|}}\sup_{g\in\mathcal{G}''}\; \left|\mathbb{E}_{\mathbb{P}_{|\Omega|} }[g(\xi)]- \mathbb{E}_{\mathbb{P} }[g(\xi)]\right|+\kappa^\theta\epsilon\\
     \leq  &\;\kappa^F\max_{\mathbb{P}\in \mathcal{P}}\min_{\mathbb{P}_{|\Omega|}\in \mathcal{P}_{|\Omega|}}\sup_{g\in\mathcal{G}}\; \left|\mathbb{E}_{\mathbb{P}_{|\Omega|} }[g(\xi)]- \mathbb{E}_{\mathbb{P} }[g(\xi)]\right|+\kappa^\theta\epsilon\\%&&\text{(Replacing $F(x,\cdot)$ by $g(\cdot)$)}\\
     %\leq &\; \kappa^F \max_{\mathbb{P}\in \mathcal{P}}\min_{\mathbb{P}_{|\Omega|}\in \mathcal{P}_{|\Omega|}}\;  \rho(\mathbb{P}_{|\Omega|}, \mathbb{P})= \kappa^F\max_{\mathbb{P}\in \mathcal{P}} \mathrm{d}(\mathbb{P}, \mathcal{P}_{|\Omega|})\\
    % \text{(Applying \eqref{define_distanceP} and \eqref{define_distancePSet})}\\
    \leq & \kappa^F \mathrm{H}\left(\mathcal{P}_{|\Omega|}, \mathcal{P}\right)+\kappa^\theta\epsilon\leq \kappa^FC^H\beta_{|\Omega|}+\kappa^\theta\epsilon. \text{(Applying Assumption~\ref{ambiguity_assumption})}
    \end{aligned}\nonumber
\end{equation}
%For the other direction, we apply \eqref{dro_converge_const2} in the inequalities and have the lower bound:
%\begin{equation}
%$  v^*-\hat{v}_{|\Omega|} \geq  -\kappa^FC^H\beta_{|\Omega|}. $%\end{equation}
$~\Box$
\end{proof}

The following corollary presents the convergence rate of \eqref{dro_model_sampled} to \eqref{dro_model} when the samples are generated to follow the uniform distribution on $\Xi$. 
\begin{corollary}\label{coro:beta_converge_dist}
Let $\xi_1, \xi_2,\dots,\xi_\omega$ be uniformly distributed on the support $\Xi\subseteq\mathbb{R}^d$ as in Proposition~\ref{beta_converge_dist}. For any given $\varepsilon>0$, there exists a $|\Omega|_0^\varepsilon$ sufficient large. Then when $|\Omega|>|\Omega|_0^\varepsilon$,  

$|v^*-\hat{v}_{|\Omega|}|<\frac{\kappa^FC^H}{2}\left(\frac{\log |\Omega|+(d-1+\varepsilon)\log \log |\Omega|}{|\Omega|}\right)^{1/d}+\kappa^\theta\sqrt{\kappa^GC^\mathcal{P}C^H\left(\frac{\log |\Omega|+(d-1+\varepsilon)\log \log |\Omega|}{|\Omega|}\right)^{1/d}}$ w.p.1.
%and \quad 
%$\text{Prob}_\mathbb{P}(G(\hat{x},\xi)\leq  0)
%>1-\theta-\sqrt{\kappa^GC^\mathcal{P}C^H\left(\frac{\log |\Omega|+(d-1+\varepsilon)\log \log |\Omega|}{|\Omega|}\right)^{1/d}}.       $ 
%    \end{aligned}\nonumber
%\end{equation}

\end{corollary}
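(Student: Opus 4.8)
The plan is to simply chain two results already in hand: the deterministic error estimate of Theorem~\ref{DRO_constraint}, namely $|\hat{v}_{|\Omega|}-v^*|\leq \kappa^FC^H\beta_{|\Omega|}+\kappa^\theta\sqrt{2\kappa^GC^\mathcal{P}C^H\beta_{|\Omega|}}$, and the almost-sure asymptotics for $\beta_{|\Omega|}$ in Proposition~\ref{beta_converge_dist}. The only genuine task is to turn the limit in \eqref{extreme_value_distribution} into an explicit upper bound on $\beta_{|\Omega|}$ valid for all $|\Omega|$ beyond some threshold, and then substitute; there is no substantive obstacle.

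First I would fix $\varepsilon>0$ and work on the probability-one event on which \eqref{extreme_value_distribution} holds. On that event, by definition of the limit, there is an index $|\Omega|_0^\varepsilon$ (depending on the sample path) such that for all $|\Omega|>|\Omega|_0^\varepsilon$ one has
\[
\frac{|\Omega|(2\beta_{|\Omega|})^d-\log|\Omega|}{\log\log|\Omega|}<d-1+\varepsilon .
\]
Since $\log\log|\Omega|>0$ for $|\Omega|$ large, rearranging gives $|\Omega|(2\beta_{|\Omega|})^d<\log|\Omega|+(d-1+\varepsilon)\log\log|\Omega|$, hence, writing $r_{|\Omega|}:=\bigl(\log|\Omega|+(d-1+\varepsilon)\log\log|\Omega|\bigr)/|\Omega|$,
\[
\beta_{|\Omega|}<\tfrac12\, r_{|\Omega|}^{1/d},\qquad\text{equivalently}\qquad 2\beta_{|\Omega|}<r_{|\Omega|}^{1/d}.
\]

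Next I would insert these two inequalities into the bound of Theorem~\ref{DRO_constraint}: bound the first summand $\kappa^FC^H\beta_{|\Omega|}$ by $\tfrac12\kappa^FC^H r_{|\Omega|}^{1/d}$, and inside the square root of the second summand replace $2\beta_{|\Omega|}$ by $r_{|\Omega|}^{1/d}$, which yields
\[
|\hat{v}_{|\Omega|}-v^*|<\frac{\kappa^FC^H}{2}\, r_{|\Omega|}^{1/d}+\kappa^\theta\sqrt{\kappa^GC^\mathcal{P}C^H\, r_{|\Omega|}^{1/d}} .
\]
Spelling out $r_{|\Omega|}$ gives exactly the claimed inequality, and since the estimate on $\beta_{|\Omega|}$ holds on an event of probability one, so does the conclusion. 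The one point needing care is the order of quantifiers: the ``w.p.1'' must be pulled to the front, so that one first fixes a sample path on which \eqref{extreme_value_distribution} holds and only then extracts the (path-dependent) threshold $|\Omega|_0^\varepsilon$; this is legitimate because Theorem~\ref{DRO_constraint} is a purely deterministic statement valid for every realization of $\Xi_{|\Omega|}$, so no measurability or independence considerations enter when combining the two results.
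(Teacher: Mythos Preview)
Your proposal is correct and follows essentially the same route as the paper's own proof: rewrite the almost-sure limit \eqref{extreme_value_distribution} as a one-sided inequality valid for all large $|\Omega|$, solve for $\beta_{|\Omega|}$, and plug into the deterministic bound of Theorem~\ref{DRO_constraint}. If anything, you are slightly more explicit than the paper about how the factor $2$ inside the square root is absorbed (via $2\beta_{|\Omega|}<r_{|\Omega|}^{1/d}$) and about the path-dependence of the threshold $|\Omega|_0^\varepsilon$.
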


\begin{proof}
    We rewrite \eqref{extreme_value_distribution} as follows. For any $\varepsilon>0$,  there exists $|\Omega|$ sufficiently large, i.e. 
there exists a $|\Omega|_0^\varepsilon$, such that for all $|\Omega|>|\Omega|_0^\varepsilon$, 
$$\left|\frac{|\Omega|(2\beta_{|\Omega|})^d-\log {|\Omega|}}{\log\log {|\Omega|}}-d+1\right|< \varepsilon.$$
By reorganizing the inequality, we have 
$$\beta_{|\Omega|}< \frac{1}{2}\left(\frac{\log |\Omega|+(d-1+\varepsilon)\log \log |\Omega|}{|\Omega|}\right)^{1/d}\quad w.p.1.$$
Since  $|v^*-\hat{v}_{|\Omega|}|\leq  \kappa^FC^H\beta_{|\Omega|}+\kappa^\theta\sqrt{2\kappa^GC^\mathcal{P}C^H\beta_{|\Omega|}}$ from Theorem~\ref{DRO_constraint}, w.p.1,
\begin{equation}
    \begin{aligned}
 &|v^*-\hat{v}_{|\Omega|}|<\frac{\kappa^FC^H}{2}\left(\frac{\log |\Omega|+(d-1+\varepsilon)\log \log |\Omega|}{|\Omega|}\right)^{1/d}\\
 &+\kappa^\theta\sqrt{\kappa^GC^\mathcal{P}C^H\left(\frac{\log |\Omega|+(d-1+\varepsilon)\log \log |\Omega|}{|\Omega|}\right)^{1/d}}.       \nonumber
    \end{aligned}
\end{equation}
%According to \eqref{constraint_feasible}, we have 
%\begin{equation}
%    \begin{aligned}
%&\text{Prob}_\mathbb{P}(G(\hat{x},\xi)\leq  0)\\
%>&1-\theta-\sqrt{\kappa^GC^\mathcal{P}C^H\left(\frac{\log |\Omega|+(d-1+\varepsilon)\log \log |\Omega|}{|\Omega|}\right)^{1/d}}.        
%    \end{aligned}\nonumber
%\end{equation}
$~\Box$
\end{proof}

The following corollary presents the convergence rate of \eqref{dro_model_sampled} to \eqref{dro_model} when the samples are generated by choosing quantization points. 
\begin{corollary}\label{coro:beta_convergence_InProbability}
 Let $\xi_1, \dots, \xi_{|\Omega|}$ be independent and identically distributed samples generalized from $\xi$ as in Proposition~\ref{prop:beta_convergence_InProbability}.
Then for any $\varepsilon>0$, when $|\Omega|$ is sufficiently large, there exist positive constants $b(\varepsilon)$ and $B(\varepsilon)$ depending on $\varepsilon$ such that
$
\text{Prob}\left(|v^*-\hat{v}_{|\Omega|}|\geq \kappa^FC^H\varepsilon+\kappa^\theta\sqrt{2\kappa^GC^\mathcal{P}C^H\varepsilon}\right) \leq B(\varepsilon) exp^{-b(\varepsilon) |\Omega|}
$.
%and \quad $\operatorname{Prob}\Big(\text{Prob}_\mathbb{P}(G(\hat{x},\xi)\leq  0) \leq 1-\theta- \sqrt{2\kappa^GC^\mathcal{P}C^H\varepsilon}\Big) 
%     \leq \; B(\varepsilon) exp^{-b(\varepsilon) |\Omega|}.  $
\end{corollary}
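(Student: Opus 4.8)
The plan is to derive Corollary~\ref{coro:beta_convergence_InProbability} as a direct consequence of Theorem~\ref{DRO_constraint} combined with Proposition~\ref{prop:beta_convergence_InProbability}, in essentially the same way that Corollary~\ref{coro:beta_converge_dist} follows from Theorem~\ref{DRO_constraint} and Proposition~\ref{beta_converge_dist}. The key observation is that Theorem~\ref{DRO_constraint} gives the deterministic bound
\begin{equation}
|\hat{v}_{|\Omega|}-v^*|\leq \kappa^FC^H\beta_{|\Omega|}+\kappa^\theta\sqrt{2\kappa^GC^\mathcal{P}C^H\beta_{|\Omega|}},\nonumber
\end{equation}
valid pointwise for every realization of the samples $\xi_1,\dots,\xi_{|\Omega|}$ (for $|\Omega|$ large enough), so the only source of randomness in the conclusion is $\beta_{|\Omega|}$ itself.

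First I would fix $\varepsilon>0$ and observe that the map $t\mapsto \kappa^FC^H t+\kappa^\theta\sqrt{2\kappa^GC^\mathcal{P}C^H t}$ is nonnegative and monotonically nondecreasing on $[0,\infty)$. Hence the event $\{|\hat{v}_{|\Omega|}-v^*|\geq \kappa^FC^H\varepsilon+\kappa^\theta\sqrt{2\kappa^GC^\mathcal{P}C^H\varepsilon}\}$ is, by the theorem's bound, contained in the event $\{\beta_{|\Omega|}\geq\varepsilon\}$: indeed if $\beta_{|\Omega|}<\varepsilon$ then by monotonicity $|\hat{v}_{|\Omega|}-v^*|\leq\kappa^FC^H\beta_{|\Omega|}+\kappa^\theta\sqrt{2\kappa^GC^\mathcal{P}C^H\beta_{|\Omega|}}<\kappa^FC^H\varepsilon+\kappa^\theta\sqrt{2\kappa^GC^\mathcal{P}C^H\varepsilon}$. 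Taking probabilities and monotonicity of measure gives
\begin{equation}
\text{Prob}\left(|v^*-\hat{v}_{|\Omega|}|\geq \kappa^FC^H\varepsilon+\kappa^\theta\sqrt{2\kappa^GC^\mathcal{P}C^H\varepsilon}\right)\leq \text{Prob}\left(\beta_{|\Omega|}\geq\varepsilon\right).\nonumber
\end{equation}

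Next I would invoke Proposition~\ref{prop:beta_convergence_InProbability}, whose hypotheses are assumed in the statement of the corollary, to bound the right-hand side: for $|\Omega|$ sufficiently large there exist positive constants $b(\varepsilon),B(\varepsilon)$ with $\text{Prob}(\beta_{|\Omega|}\geq\varepsilon)\leq B(\varepsilon)\exp^{-b(\varepsilon)|\Omega|}$. Chaining the two inequalities yields exactly the claimed bound. One bookkeeping point to handle cleanly is the phrase ``for $|\Omega|$ sufficiently large'': Theorem~\ref{DRO_constraint} itself requires $|\Omega|$ large enough for its hypotheses (nonempty compact solution set of the sampled model, existence of the inner maximizer), and Proposition~\ref{prop:beta_convergence_InProbability} requires $|\Omega|$ large for its own conclusion; I would simply take $|\Omega|$ larger than the maximum of the two thresholds, so both apply simultaneously.

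I do not anticipate a substantive obstacle here — the corollary is a routine combination of an already-proved deterministic error bound with an already-quoted tail estimate on $\beta_{|\Omega|}$. The only mild subtlety is making sure the event-inclusion argument is stated with the correct direction of monotonicity and that the ``sufficiently large $|\Omega|$'' qualifiers from the two cited results are merged rather than treated separately; neither requires any real work.
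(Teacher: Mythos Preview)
Your proposal is correct and follows essentially the same approach as the paper: both combine the deterministic bound from Theorem~\ref{DRO_constraint} with the tail estimate on $\beta_{|\Omega|}$ from Proposition~\ref{prop:beta_convergence_InProbability}, using the monotonicity of $t\mapsto \kappa^FC^H t+\kappa^\theta\sqrt{2\kappa^GC^\mathcal{P}C^H t}$ to pass from the event on $|v^*-\hat{v}_{|\Omega|}|$ to the event $\{\beta_{|\Omega|}\geq\varepsilon\}$. The paper writes the intermediate step as an event on the transformed quantity rather than directly on $\beta_{|\Omega|}$, but this is a cosmetic difference.
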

\begin{proof}
    From Theorem~\ref{DRO_constraint}, we know 
$|v^*-\hat{v}_{|\Omega|}|\leq  \kappa^FC^H\beta_{|\Omega|}+\kappa^\theta\sqrt{2\kappa^GC^\mathcal{P}C^H\beta_{|\Omega|}}$.
Clearly, for any $\varepsilon>0$ and sufficiently large  $|\Omega|$, from \eqref{quantizer_beta} in Proposition~\ref{prop:beta_convergence_InProbability},  we have
\begin{equation}\label{convergence_rate_prop2}
\begin{aligned}
&\text{Prob}\left(|v^*-\hat{v}_{|\Omega|}|\geq \kappa^FC^H\varepsilon+\kappa^\theta\sqrt{2\kappa^GC^\mathcal{P}C^H\varepsilon}\right)\\
\leq &\;  \text{Prob}\Big(\kappa^FC^H\beta_{|\Omega|}+\kappa^\theta\sqrt{2\kappa^GC^\mathcal{P}C^H\beta_{|\Omega|}}\geq \kappa^FC^H\varepsilon+\kappa^\theta\sqrt{2\kappa^GC^\mathcal{P}C^H\varepsilon}\Big)\\
%\leq &\;  \text{Prob}\big(\beta_{|\Omega|}\geq \varepsilon\big)+\text{Prob}\big(\sqrt{\beta_{|\Omega|}}\geq \sqrt{\varepsilon}\big)\\
\leq &\;B(\varepsilon) exp^{-b(\varepsilon) |\Omega|}.
\end{aligned}
\end{equation}
The second inequality holds because $\kappa^FC^H\beta_{|\Omega|}+\kappa^\theta\sqrt{2\kappa^GC^\mathcal{P}C^H\beta_{|\Omega|}}$ is monotonically increasing in $\beta_{|\Omega|}$.  
%\begin{equation}\label{convergence_rate_prop2}
%\text{Prob}\left(\frac{|v^*-\hat{v}_{|\Omega|}|}{\kappa^FC^H}\geq \varepsilon\right)\leq  \text{Prob}\left(\beta_{|\Omega|}\geq \varepsilon\right)\leq B(\varepsilon) exp^{-b(\varepsilon) |\Omega|}. 
%\end{equation}
%Then according to \eqref{constraint_feasible}, for sufficiently large $|\Omega|$, we have 
%\begin{equation}\nonumber
%    \begin{aligned}
     %&\operatorname{Prob}\left(\frac{1-\theta- \text{Prob}_\mathbb{P}(G(\hat{x},\xi)\leq  0)}{\sqrt{2\kappa^GC^\mathcal{P}C^H}} \geq \sqrt{\varepsilon}\right)\\
%     \leq & \;\text{Prob}\left(\sqrt{\beta_{|\Omega|}}\geq \sqrt{\varepsilon}\right)\leq B(\varepsilon) exp^{-b(\varepsilon) |\Omega|}.   
%    \end{aligned}
%\end{equation}
$~\Box$
\end{proof}
The bounds in this section are worst-case bounds and such bounds allow us to develop a formal proof of convergence. The actual number of sample points can be determined empirically for specific problems. Empirically, we have observed that for $\Xi\subseteq \mathbb{R}^{10}$, this number is about 500 to 1000.

\section{Upper and Lower Bounds}
Shapiro and Philpott \cite{Shapiro07} provide a statistical procedure for estimating the optimality gap in the SAA of a stochastic program. The situation in the distributional robust context is different since the samples in solutions of the inner problems with the discrete approximation may not be equally weighted for each scenario.
To calculate a statistical estimate for the upper bound for the objective function of \eqref{dro_model_sampled} with $100(1-\alpha) \%$ confidence, we modify the statistical procedure in \cite{Shapiro07} by generating multiple batches of size $|\Omega|$, instead of using samples from a single batch. Also, compared to the method in \cite{Xu18} where the authors assumed the ambiguity set to be moment-based and calculated the upper bound by Lagrange duality, the following method allows us to estimate the upper bounds for any ambiguity sets. 
For the feasible solution set $\mathcal{X}'$ and a given feasible solution $\bar{x}\in\mathcal{X}'$, we know that 
$$q_{|\Omega|}(\bar{x})\geq \min_{x\in\mathcal{X}'} q_{|\Omega|}(x)=\hat{v}.$$
Using independently generated batches $|\Omega|^{(1)},\dots,|\Omega|^{(M)}$, we solve the subproblem $q_{|\Omega|}(\bar{x})$ with given feasible solution $\tilde{x}$ to optimality $M$ times. Let $\hat{q}^{(1)}_{|\Omega|}(\bar{x}),\dots,\hat{q}^{(M)}_{|\Omega|}(\bar{x})$ be the computed optimal values of the subproblems. 
Then an upper bound for  $ \mathbb{E}[q_{|\Omega|}(\hat{x})]$ with $100(1-\alpha)\%$ confidence is given as: 
\begin{equation}
    U_{|\Omega|,M}(\bar{x}):=\frac{1}{M}\sum_{m=1}^M \hat{q}^{(m)}_{|\Omega|}(\bar{x})+ t_{\alpha,\nu}\hat{\sigma}_{|\Omega|,M}(\bar{x}),\nonumber
\end{equation}
where the variance  $\hat{\sigma}^2_{|\Omega|,M}$ is estimated as 
\begin{equation}
\hat{\sigma}_{|\Omega|,M}^2(\bar{x}):=\frac{1}{M\left(M-1\right)} \sum_{m=1}^{M}\left[\hat{q}^{(m)}_{|\Omega|}(\bar{x})-\frac{1}{M}\sum_{m=1}^M \hat{q}^{(m)}_{|\Omega|}(\bar{x})\right]^2.\nonumber
\end{equation}

The lower bound estimation procedure is similar to that in \cite{Shapiro07}. The optimal value of \eqref{dro_model_sampled} based on an approximation using samples of size $|\Omega|$ is $\hat{v}_{|{\Omega}|}$.
For any $x\in\mathcal{X}'$, we have 
$$q_{|\Omega|}(x)= \mathbb{E}[\hat{q}_{|\Omega|}(x)]\geq \mathbb{E}[\min_{x'\in\mathcal{X}'}\hat{q}_{|\Omega|}(x')]= \mathbb{E}[\hat{v}_{|\Omega|}].$$
We solve the approximated model \eqref{dro_model_sampled} based on independently generated replicates
%,each of size $N$, 
$M'$ times and $\hat{v}_{|\Omega|}^{(1)},...,\hat{v}_{|\Omega|}^{(M')}$ are the computed optimal values of these models. 
The $100(1-\alpha)\%$ lower bound for $\mathbb{E}[\hat{v}_{|\Omega|}]$ is given by
\begin{equation}
L_{|\Omega|, M'}:=\frac{1}{M'} \sum_{m=1}^{M'} \hat{v}_{|\Omega|}^{(m)}-t_{ \alpha, \nu} \hat{\sigma}_{|\Omega|, M'},\nonumber
\end{equation}
where the variance estimate is defined as
\begin{equation}
\hat{\sigma}_{|\Omega|, M}^2:=\frac{1}{M'(M'-1)} \sum_{m=1}^{M'}\left(\hat{v}_{|\Omega|}^{(m)}-\frac{1}{M'} \sum_{m=1}^{M'} \hat{v}_{|\Omega|}^{(m)}\right)^2.\nonumber
\end{equation}

We can use small values of $M$ and $M'$, for example, $M=M'=10$ in applications, and use $t_{\alpha, \nu}$ as the $\alpha$-critical value of the $t$-distribution with $\nu$ degrees of freedom, where $\nu=M-1$.

\section{Examples}\label{sec:examples}
Assumption~\ref{ambiguity_assumption} is a key assumption in the convergence analysis developed in Section~\ref{proof_DRO_convergence}. We now give several examples and discuss how this assumption can be satisfied. There are various ways to construct the ambiguity set \cite{Rahimian19} and we review some of them to present a convergence analysis of $\mathcal{P}_{|\Omega|}$ to $\mathcal{P}$.

\subsection{Moment-Based Ambiguity Set}

Assume that the ambiguity set is defined as:
\begin{equation}\label{ambigutyset_moment}
    \begin{aligned}
\mathcal{P} &=\{\mathbb{P}\mid\mathbb{E}_{\mathbb{P}}[\psi(\xi)]\in\mathcal{K} \},\\
\mathcal{P}_{|\Omega|}&=\{\mathbb{P}_{|\Omega|}\mid  \mathbb{P}_{|\Omega|}=\left(p_1,\dots,p_{|\Omega|}\right),\sum_{\omega=1}^{|\Omega|}p_\omega=1,  p_\omega\geq 0\; \forall \omega, \mathbb{E}_{\mathbb{P}_{|\Omega|}}[\psi(\xi)]= \sum_{\omega=1}^{|\Omega|}p_\omega\psi(\xi_\omega)\in\mathcal{K}\}.
    \end{aligned}
\end{equation}
where $\psi(\cdot)$ is a mapping consisting of vectors and/or matrices and $\mathcal{K}$ is a closed convex cone in a matrix space.  \cite{{Liu19}} provides specific conditions to ensure that Assumption~\ref{ambiguity_assumption} is satisfied for the moment-based ambiguity set \eqref{ambigutyset_moment}. 
Note that the ambiguity set \eqref{ambigutyset_moment} is similar to the ambiguity set in Chen, Sun and Xu \cite[Theorem 1]{chen2021decomposition}, where a more general set with additional equality constraints is considered.
%\begin{equation}\label{ambiguity_equalmoment}
%\begin{aligned}
%\mathcal{P} &=\{\mathbb{P}\mid\mathbb{E}_{\mathbb{P}}[\psi_E(\xi)]=\mu_E, \mathbb{E}_{\mathbb{P}}[\psi_I(\xi)]\preceq\mu_I \},   
%\end{aligned}
%\end{equation}
%where $\mu_E$ represents the mean of the random vector/matrix $\psi_E(\xi)$ and $\mu_I$ represents the upper bound of the mean of the random vector/matrix $\psi_I(\xi)$. The ambiguity set \eqref{ambiguity_equalmoment} considers the moment systems with equality constraints. However, we focus on the bounds of the moment systems, which only contain inequality constraints.

\begin{assumption}
\cite[Assumption 1]{Liu19}\label{def_slater}
For a given mapping $\psi(\cdot)$, there exists $\mathbb{P}_0 \in\mathcal{P}$ and a constant $\alpha>0$ such that
$\mathbb{E}_{\mathbb{P}_0}[\psi(\xi)] +\alpha\mathcal{B} \subset \mathcal{K},$ 
where $\mathcal{B}$ is the unit ball in the space of $\mathcal{K}$ and $+$ is the Minkowski sum.
\end{assumption}

\begin{theorem}\cite[Theorem 12]{Liu19}\label{moment_set_converge}
Let $\mathcal{P}$ and $\mathcal{P}_{|\Omega|}$ be defined as in \eqref{ambigutyset_moment}. If  $\mathcal{P}$ satisfies Assumption~\ref{def_slater},   
%Let the diameter of $\Xi$ be $\delta_\Xi$.
%$\beta_{|\Omega|}\rightarrow 0$  as ${|\Omega|} \rightarrow \infty$.
and the function $\psi(\xi)$ is Lipschitz continuous with Lipschitz constant $\kappa^\psi$,
then for sufficiently large ${|\Omega|}$, 
$\mathrm{H}\left(\mathcal{P}_{|\Omega|}, \mathcal{P}\right) \leq \left(1+\frac{2\kappa^\psi||\textbf{1}||M_\Xi}{\alpha}\right)\beta_{|\Omega|}$, where $\bf{1}$ is a matrix that has the same size as $\psi(\cdot)$ with each component being 1.
\end{theorem}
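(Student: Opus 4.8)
The plan is to handle the two one-sided pieces of the Hausdorff distance $\mathrm{H}(\mathcal{P}_{|\Omega|},\mathcal{P})$ in \eqref{define_distanceSet} separately. For the trivial direction, every $\mathbb{P}_{|\Omega|}\in\mathcal{P}_{|\Omega|}$ is supported on $\Xi_{|\Omega|}\subseteq\Xi$ and satisfies $\mathbb{E}_{\mathbb{P}_{|\Omega|}}[\psi(\xi)]\in\mathcal{K}$, so $\mathcal{P}_{|\Omega|}\subseteq\mathcal{P}$ and hence $\mathrm{d}(\mathbb{P}_{|\Omega|},\mathcal{P})=0$; the first term inside the outer maximum in \eqref{define_distanceSet} vanishes. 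It then suffices to show that for every $\mathbb{P}\in\mathcal{P}$ there is some $\mathbb{P}_{|\Omega|}\in\mathcal{P}_{|\Omega|}$ with $\rho(\mathbb{P},\mathbb{P}_{|\Omega|})\le\bigl(1+\tfrac{2\kappa^\psi\|\mathbf{1}\|M_\Xi}{\alpha}\bigr)\beta_{|\Omega|}$, which bounds $\mathrm{d}(\mathbb{P},\mathcal{P}_{|\Omega|})$ and thus the second term.

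First I would build a rough candidate by transporting $\mathbb{P}$ onto the grid. Fix a measurable nearest-point map $T\colon\Xi\to\Xi_{|\Omega|}$, so that $\|\xi-T(\xi)\|=\mathrm{d}(\xi,\Xi_{|\Omega|})\le\beta_{|\Omega|}$ by \eqref{beta_distance}, and let $\tilde{\mathbb{P}}$ be the pushforward of $\mathbb{P}$ under $T$, a discrete distribution on $\Xi_{|\Omega|}$. From the $1$-Lipschitz characterization \eqref{define_distanceP}, $\rho(\mathbb{P},\tilde{\mathbb{P}})\le\sup_{g\in\mathcal{G}}\mathbb{E}_{\mathbb{P}}\bigl[|g(\xi)-g(T(\xi))|\bigr]\le\mathbb{E}_{\mathbb{P}}\bigl[\|\xi-T(\xi)\|\bigr]\le\beta_{|\Omega|}$, and since $\psi$ is $\kappa^\psi$-Lipschitz, $\|\mathbb{E}_{\tilde{\mathbb{P}}}[\psi(\xi)]-\mathbb{E}_{\mathbb{P}}[\psi(\xi)]\|\le\kappa^\psi\beta_{|\Omega|}$. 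As $\mathbb{E}_{\mathbb{P}}[\psi(\xi)]\in\mathcal{K}$, the moment of $\tilde{\mathbb{P}}$ lies within distance $\kappa^\psi\beta_{|\Omega|}$ of $\mathcal{K}$; but $\tilde{\mathbb{P}}$ itself may fail $\mathbb{E}_{\tilde{\mathbb{P}}}[\psi]\in\mathcal{K}$, so it must be corrected toward a strictly feasible discrete distribution.

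The correction uses the Slater point of Assumption~\ref{def_slater}. Let $\tilde{\mathbb{P}}_0$ be the pushforward of $\mathbb{P}_0$ under the same $T$; then $\|\mathbb{E}_{\tilde{\mathbb{P}}_0}[\psi]-\mathbb{E}_{\mathbb{P}_0}[\psi]\|\le\kappa^\psi\beta_{|\Omega|}$, so for $|\Omega|$ large enough that $\kappa^\psi\beta_{|\Omega|}$ is small relative to $\alpha$, the inclusion $\mathbb{E}_{\mathbb{P}_0}[\psi]+\alpha\mathcal{B}\subset\mathcal{K}$ still leaves $\mathbb{E}_{\tilde{\mathbb{P}}_0}[\psi]+\gamma\mathcal{B}\subset\mathcal{K}$ for some $\gamma>0$ close to $\alpha$. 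I would then set $\mathbb{P}_{|\Omega|}:=(1-\lambda)\tilde{\mathbb{P}}+\lambda\tilde{\mathbb{P}}_0$ for a scalar $\lambda\in[0,1]$; this is a discrete distribution on $\Xi_{|\Omega|}$ with nonnegative weights summing to one, so $\mathbb{P}_{|\Omega|}\in\mathcal{P}_{|\Omega|}$ iff $(1-\lambda)\mathbb{E}_{\tilde{\mathbb{P}}}[\psi]+\lambda\mathbb{E}_{\tilde{\mathbb{P}}_0}[\psi]\in\mathcal{K}$. The core computation is to pick the smallest $\lambda$ for which this holds: write $\mathbb{E}_{\tilde{\mathbb{P}}}[\psi]$ as a point of $\mathcal{K}$ plus an error of norm $\le\kappa^\psi\beta_{|\Omega|}$, convert this Frobenius-norm bound into the norm defining $\mathcal{B}$ (this is what produces the factor $\|\mathbf{1}\|$), and absorb it into the margin $\lambda\gamma$ around $\lambda\,\mathbb{E}_{\tilde{\mathbb{P}}_0}[\psi]$, using that $\mathcal{K}$ is a closed convex cone ($\mathcal{K}+\mathcal{K}\subseteq\mathcal{K}$, $t\mathcal{K}\subseteq\mathcal{K}$ for $t\ge0$). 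Since the margin of $\tilde{\mathbb{P}}_0$ and the perturbation of $\tilde{\mathbb{P}}$ combine so that $\gamma$ plus that perturbation equals $\alpha$, one finds $\lambda=\kappa^\psi\|\mathbf{1}\|\beta_{|\Omega|}/\alpha$ suffices, and this is at most $1$ for $|\Omega|$ large.

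Finally I would assemble the estimate. Because $\tilde{\mathbb{P}}$ and $\tilde{\mathbb{P}}_0$ are both supported on $\Xi_{|\Omega|}$, $\rho(\tilde{\mathbb{P}},\mathbb{P}_{|\Omega|})=\lambda\,\rho(\tilde{\mathbb{P}},\tilde{\mathbb{P}}_0)\le 2\lambda M_\Xi$, the last step since the Kantorovich distance between two distributions supported in a common set is bounded by that set's diameter, which is $\le 2M_\Xi$ by Assumption~\ref{basic_assumption}.1. Hence, by the triangle inequality for $\rho$,
\[
\mathrm{d}(\mathbb{P},\mathcal{P}_{|\Omega|})\le\rho(\mathbb{P},\mathbb{P}_{|\Omega|})\le\rho(\mathbb{P},\tilde{\mathbb{P}})+\rho(\tilde{\mathbb{P}},\mathbb{P}_{|\Omega|})\le\beta_{|\Omega|}+2\lambda M_\Xi\le\Bigl(1+\tfrac{2\kappa^\psi\|\mathbf{1}\|M_\Xi}{\alpha}\Bigr)\beta_{|\Omega|},
\]
and combining with the vanishing first term gives the claimed bound on $\mathrm{H}(\mathcal{P}_{|\Omega|},\mathcal{P})$. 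The main obstacle is the correction step: showing that a mixing weight $\lambda=O(\beta_{|\Omega|})$ already pulls the perturbed moment back into $\mathcal{K}$. This is precisely where Assumption~\ref{def_slater} is indispensable — without an interior point the required $\lambda$ need not be $O(\beta_{|\Omega|})$ — where the constant $\|\mathbf{1}\|$ is generated by the matrix-norm conversion, and why the statement restricts to "$|\Omega|$ sufficiently large", namely to keep $\lambda\le 1$ and $\tilde{\mathbb{P}}_0$ strictly feasible.
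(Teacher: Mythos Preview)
The paper does not give its own proof of this theorem; it is quoted verbatim as \cite[Theorem~12]{Liu19} and no proof environment follows. So there is nothing in the paper to compare against directly. Your outline is essentially the argument of Liu, Pichler, and Xu: push both $\mathbb{P}$ and the Slater measure $\mathbb{P}_0$ forward onto $\Xi_{|\Omega|}$ via a nearest-point map, form the convex combination $(1-\lambda)\tilde{\mathbb{P}}+\lambda\tilde{\mathbb{P}}_0$, choose $\lambda$ of order $\beta_{|\Omega|}/\alpha$ so the combined moment lands in $\mathcal{K}$, and finish by the triangle inequality together with the diameter bound $\rho(\tilde{\mathbb{P}},\tilde{\mathbb{P}}_0)\le 2M_\Xi$.

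The only soft spot in your sketch is the ``norm conversion'' that you invoke to explain the $\|\mathbf{1}\|$ factor. As written, if $\kappa^\psi$ is the Lipschitz constant of $\psi$ in the \emph{same} norm that defines the Slater ball $\mathcal{B}$, the argument you give yields $\lambda=\kappa^\psi\beta_{|\Omega|}/\alpha$ with no $\|\mathbf{1}\|$ at all. The factor $\|\mathbf{1}\|$ in \cite{Liu19} appears because their Lipschitz hypothesis on $\psi$ and the norm on the space of $\mathcal{K}$ are not matched; in effect one must convert a component-wise bound of size $\kappa^\psi\beta_{|\Omega|}$ into a bound in the ambient norm, which costs exactly $\|\mathbf{1}\|$. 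You flag the right place for this, but to make the argument complete you would need to be explicit about which norm $\mathcal{B}$ lives in and how $\kappa^\psi$ is measured, since that is what fixes the constant.
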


\subsection{Mean and Variance Ambiguity Set}

Let $\Sigma_0\succ0$ and consider the mean and variance ambiguity sets:
\begin{equation}\label{ambigutyset_variance}
    \begin{aligned}
     \mathcal{P}& =\{\mathbb{P}\mid \mu_0-\gamma_R I\leq \mathbb{E}_{\mathbb{P}}[\xi]\leq \mu_0+\gamma_L I, \mathbb{E}_{\mathbb{P}}[\xi\xi^T]-\mathbb{E}_{\mathbb{P}}[\xi]\mathbb{E}_{\mathbb{P}}[\xi]^T\preceq \gamma_s\Sigma_0\},\\
\mathcal{P}_{|\Omega|} & =\{\mathbb{P}_{|\Omega|}\mid\mathbb{P}_{|\Omega|}=\left(p_1,\dots,p_{|\Omega|}\right),\sum_{\omega=1}^{|\Omega|}p_\omega=1, p_\omega\geq 0\; \forall \omega, \mu_0-\gamma_R I\leq \sum_{\omega=1}^{|\Omega|}p_\omega\xi_\omega\leq \mu_0+\gamma_L I,\\
&\sum_{\omega=1}^{|\Omega|}p_\omega\xi_\omega\xi^T_\omega-\left(\sum_{\omega=1}^{|\Omega|}p_\omega\xi_\omega\right)\left(\sum_{\omega=1}^{|\Omega|}p_\omega\xi_\omega\right)^T\preceq\gamma_s\Sigma_0\},   
    \end{aligned}
\end{equation}
where $\gamma_R$, $\gamma_L$ and $\gamma_s$ are positive numbers and $\gamma_s>1$, $\mu_0$ is the given sample mean vector and $\Sigma_0$ is the given sample variance matrix. 

Note that the ambiguity set \eqref{ambigutyset_variance} is different from the ambiguity set in Delage and Ye \cite{Delage10}, where the ambiguity set is defined as
\begin{equation}\label{ambiguity_2moment}
\begin{aligned}
 \mathcal{P}&:=\{\mathbb{P}\mid
\mathbb{E}_{\mathbb{P}}\left[\xi-\mu_0\right]^T \Sigma_0^{-1} \mathbb{E}_{\mathbb{P}}\left[\xi-\mu_0\right] \leq \gamma_1,\mathbb{E}_{\mathbb{P}}\left[\left(\xi-\mu_0\right)\left(\xi-\mu_0\right)^T\right] \preceq \gamma_2 \Sigma_0
\},   
\end{aligned}
\end{equation}
with nonnegative constants $\gamma_1$ and $\gamma_2$, and $\mu_0$ and $\Sigma_0$ are the sample mean and sample covariance.
The ambiguity set \eqref{ambiguity_2moment} forces the mean and the centered second-moment matrix of $\xi$ to be within the upper bounds. However, in \eqref{ambigutyset_variance}, we allow the mean to be unspecified and bound the overall variance. The benefit from the ambiguity set of constraining variance is that in practice, variance directly measures the dispersion of a distribution from its mean, which in our case is unknown.

\begin{theorem}
Let $\mathcal{P}$ and $\mathcal{P}_{|\Omega|}$ be defined as in \eqref{ambigutyset_variance}.
For sufficiently large ${|\Omega|}$, 
$\mathrm{H}\left(\mathcal{P}_{|\Omega|}, \mathcal{P}\right) \leq \left(1+\frac{2\sqrt{2+16M_\Xi^2}M_\Xi\sqrt{d^2+2d}}{\alpha}\right)\beta_{|\Omega|}$.
\end{theorem}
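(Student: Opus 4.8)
The plan is to reduce this theorem to Theorem~\ref{moment_set_converge} by recognizing the mean–variance ambiguity set \eqref{ambigutyset_variance} as a special case of the moment-based ambiguity set \eqref{ambigutyset_moment}, with an appropriate choice of the lifting map $\psi(\cdot)$ and the closed convex cone $\mathcal{K}$, and then computing explicit values for the Lipschitz constant $\kappa^\psi$ and the Slater radius $\alpha$. First, I would write $\psi(\xi) = (\xi, \xi\xi^T)$, taking values in $\mathbb{R}^d \times \mathbb{S}^d$. The constraints $\mu_0 - \gamma_R I \le \mathbb{E}_\mathbb{P}[\xi] \le \mu_0 + \gamma_L I$ are linear in $\mathbb{E}_\mathbb{P}[\xi]$, but the variance constraint $\mathbb{E}_\mathbb{P}[\xi\xi^T] - \mathbb{E}_\mathbb{P}[\xi]\mathbb{E}_\mathbb{P}[\xi]^T \preceq \gamma_s \Sigma_0$ is \emph{not} a linear (conic) constraint in the moment vector $\mathbb{E}_\mathbb{P}[\psi(\xi)]$ because of the quadratic term $\mathbb{E}_\mathbb{P}[\xi]\mathbb{E}_\mathbb{P}[\xi]^T$. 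The standard device is a Schur complement: the variance constraint is equivalent to the linear matrix inequality
\begin{equation}
\begin{pmatrix} \gamma_s \Sigma_0 + \mathbb{E}_\mathbb{P}[\xi]\mathbb{E}_\mathbb{P}[\xi]^T - \mathbb{E}_\mathbb{P}[\xi\xi^T] \end{pmatrix} \succeq 0,
\nonumber
\end{equation}
which is still quadratic, so instead I would lift further or appeal to the fact that the feasible region in $(\mathbb{E}_\mathbb{P}[\xi], \mathbb{E}_\mathbb{P}[\xi\xi^T])$-space can be written as a single LMI in these two moments via a Schur complement block matrix, whose entries are affine in the moments. Either way, one arrives at a representation in which $\mathcal{K}$ is a closed convex cone (a product of nonnegative orthants for the mean-box constraints and a positive-semidefinite cone for the variance constraint) and $\mathbb{E}_\mathbb{P}[\tilde\psi(\xi)] \in \mathcal{K}$ for a suitable affine-in-moments map.

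Next I would verify the two hypotheses of Theorem~\ref{moment_set_converge}. For the Lipschitz constant: each component of $\psi(\xi) = (\xi, \xi\xi^T)$ is Lipschitz on the compact set $\Xi$ (using $\|\xi\| \le M_\Xi$), and a direct computation gives $\|\psi(\xi') - \psi(\xi'')\| \le \sqrt{1 + (2M_\Xi)^2}\,\|\xi' - \xi''\|$ in an appropriate norm, since $\|\xi'\xi'^T - \xi''\xi''^T\| \le (\|\xi'\| + \|\xi''\|)\|\xi' - \xi''\| \le 2M_\Xi \|\xi' - \xi''\|$; combining the block bounds and accounting for the extra factors from the LMI block structure yields the constant $\sqrt{2 + 16M_\Xi^2}$ appearing in the statement. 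For the Slater condition (Assumption~\ref{def_slater}): since $\gamma_s > 1$, $\gamma_R, \gamma_L > 0$, and $\Sigma_0 \succ 0$, one can exhibit $\mathbb{P}_0$ (e.g. any distribution with mean $\mu_0$ and covariance $\Sigma_0$, or a small perturbation thereof, supported in $\Xi$) whose moment image lies strictly in the interior of $\mathcal{K}$, with a quantifiable slack $\alpha$ depending on $\gamma_s - 1$, $\gamma_R$, $\gamma_L$, and $\lambda_{\min}(\Sigma_0)$. I would then plug $\kappa^\psi = \sqrt{2 + 16M_\Xi^2}$, the dimension factor $\|\mathbf{1}\| = \sqrt{d^2 + 2d}$ (the Frobenius norm of the all-ones matrix of the size of the lifted block, which has $d$ mean entries and a $d\times d$ block, consistent with $\sqrt{d^2 + 2d}$), and $M_\Xi$ into the bound $\mathrm{H}(\mathcal{P}_{|\Omega|}, \mathcal{P}) \le (1 + 2\kappa^\psi \|\mathbf{1}\| M_\Xi / \alpha)\beta_{|\Omega|}$ from Theorem~\ref{moment_set_converge} to obtain exactly the claimed inequality.

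The main obstacle is the first step: the variance constraint is genuinely nonlinear in the natural moments, so one must be careful that the Schur-complement lifting produces a map $\tilde\psi$ whose moment image is still \emph{linear} (affine) in $\mathbb{E}_\mathbb{P}[\psi(\xi)]$ — otherwise Theorem~\ref{moment_set_converge} does not apply. The resolution is that the block matrix
\begin{equation}
\begin{pmatrix} \gamma_s \Sigma_0 & \mathbb{E}_\mathbb{P}[\xi] \\ \mathbb{E}_\mathbb{P}[\xi]^T & 1 \end{pmatrix} - \begin{pmatrix} \mathbb{E}_\mathbb{P}[\xi\xi^T] & 0 \\ 0 & 0 \end{pmatrix} \succeq 0
\nonumber
\end{equation}
\emph{is} affine in the moments $(\mathbb{E}_\mathbb{P}[\xi], \mathbb{E}_\mathbb{P}[\xi\xi^T])$ and, by the Schur complement lemma (valid since $\gamma_s \Sigma_0 \succ 0$), is equivalent to the variance constraint; hence $\mathcal{K}$ can be taken as a product of a box (affine image of $\mathbb{E}_\mathbb{P}[\xi]$) and the PSD cone acting on this $(d+1)\times(d+1)$ affine block, and $\tilde\psi$ is the corresponding affine-in-$\psi$ map. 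Once this reformulation is pinned down, propagating the constants through Theorem~\ref{moment_set_converge} is routine bookkeeping, and the stated bound follows. A secondary nuisance is keeping the norm conventions consistent (Frobenius versus operator norm, and how the block sizes contribute to $\|\mathbf{1}\|$ and to $\kappa^\psi$), which must be matched to the conventions in \cite{Liu19} to land on the precise constants in the statement.
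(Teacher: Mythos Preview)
Your overall strategy---cast \eqref{ambigutyset_variance} as an instance of \eqref{ambigutyset_moment}, compute $\kappa^\psi$ and a Slater radius, and invoke Theorem~\ref{moment_set_converge}---matches the paper's exactly. The gap is at the step you yourself flag as the main obstacle. Your Schur block is not equivalent to the variance constraint: taking the complement of the $(2,2)$ entry $1$ gives $\gamma_s\Sigma_0-\mathbb{E}_\mathbb{P}[\xi\xi^T]-\mathbb{E}_\mathbb{P}[\xi]\mathbb{E}_\mathbb{P}[\xi]^T\succeq 0$, whereas the variance bound requires the \emph{opposite} sign on the rank-one term, $\gamma_s\Sigma_0-\mathbb{E}_\mathbb{P}[\xi\xi^T]+\mathbb{E}_\mathbb{P}[\xi]\mathbb{E}_\mathbb{P}[\xi]^T\succeq 0$. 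This is not a repairable slip: the set $\{(m,M):\gamma_s\Sigma_0-M+mm^T\succeq 0\}$ is nonconvex in the moment pair $(m,M)$ (the variance of a mixture dominates the mixture of the variances), so no LMI affine in $(\mathbb{E}_\mathbb{P}[\xi],\mathbb{E}_\mathbb{P}[\xi\xi^T])$ can represent it.

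The paper does not attempt a Schur lift. It writes the three-block map
\[
\psi(\xi)=\bigl(\xi-\mu_0-\gamma_L,\ \mu_0-\xi-\gamma_R,\ (\xi-\mu_0)(\xi-\mu_0)^T-(\mathbb{E}[\xi]-\mu_0)(\mathbb{E}[\xi]-\mu_0)^T-\gamma_s\Sigma_0\bigr)
\]
and pushes it into the product of two nonpositive orthants and the NSD cone; the $\mathbb{E}[\xi]$ term is treated as a constant in the Lipschitz differencing, and the matrix block is bounded via $\|ab^T-cd^T\|\le\|a\|\|b-d\|+\|a-c\|\|d\|$ to obtain $\kappa^\psi=\sqrt{2+16M_\Xi^2}$. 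With \emph{two} $d$-vector mean blocks and one $d\times d$ matrix block, $\|\mathbf{1}\|=\sqrt{2d+d^2}$ (your count of ``$d$ mean entries and a $d\times d$ block'' yields $d^2+d$, not $d^2+2d$). The Slater radius is made explicit as $\alpha=\min\{\gamma_L,\gamma_R,(\gamma_s-1)\lambda_{\min}(\Sigma_0)\}$ by taking $\mathbb{P}_0$ with mean $\mu_0$ and covariance $\Sigma_0$, which your sketch leaves implicit.
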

\begin{proof}
We can formulate \eqref{ambigutyset_variance} as the same structure as \eqref{ambigutyset_moment}, where $\psi(\xi)$ is formulated as:
\begin{equation}
    %\psi(\xi)=\left(\begin{array}{c}
\psi(\xi):=\left(\begin{array}{cc}
\xi- \mu_0-\gamma_L\\
\mu_0-\xi- \gamma_R\\
(\xi-\mu_0)(\xi-\mu_0)^T-(\mathbb{E}[\xi]-\mu_0)(\mathbb{E}[\xi]-\mu_0)^T-\gamma_s\Sigma_0 \\
%\left(\xi-\mu_0\right)\left(\xi-\mu_0\right)^T-\gamma_2 \Sigma_0
%\end{array}\right)
\end{array}\right)\nonumber
\end{equation}
Then the ambiguity set $\mathcal{P}$ in \eqref{ambigutyset_variance} satisfies Assumption~\ref{def_slater} with $\alpha=\min\{\gamma_L,\gamma_R,(\gamma_s-1)\lambda_{\min}\}$, where $\lambda_{\min}$ is the smallest eigenvalue of $\Sigma_0$. 
Note that for $\xi',\xi''\in\Xi$, 
\begin{equation}\label{matrixproductTerm}
\begin{aligned}
&\quad \| \xi'(\xi')^T-\xi''(\xi'')^T\|  \\
& =  \| \xi'(\xi')^T-\xi'(\xi'')^T+\xi'(\xi'')^T- \xi''(\xi'')^T\|\\
%& =  \| \xi'((\xi')^T-(\xi'')^T)+(\xi'-\xi'')(\xi'')^T\|\\
& \leq \| \xi'((\xi')^T-(\xi'')^T)\|+\|(\xi'-\xi'')(\xi'')^T|\|\\
%\text{(Triangle Inequality)}\\
&\leq \| \xi'\|\cdot\|(\xi')^T-(\xi'')^T\|+\|\xi'-\xi''\|\cdot\|(\xi'')^T\|\\
%& \qquad\qquad\qquad\qquad\qquad\text{($\|AB\|\leq \|A\|\cdot\|B\|$)}\\
%&= \| \xi'\|\cdot\|\xi'-\xi''\|+\|\xi'-\xi''\|\cdot\|\xi''\|%&&\text{(Applying \eqref{norm1_inequaility})}\\
& \leq 2M_\Xi\|\xi'-\xi''\|. \qquad\text{($\|\xi\|\leq M_\Xi,\; \forall \xi\in\Xi$)}
\end{aligned}   
\end{equation}

We now have the following:
\begin{equation}
\begin{aligned}
&\quad \|\psi(\xi')-\psi(\xi'')\|\\
&=\left|\left|\left(\begin{array}{cc}
\xi'- \xi''\\
\xi'- \xi''\\
(\xi'-\mu_0)(\xi'-\mu_0)^T-(\xi''-\mu_0)(\xi''-\mu_0)^T
 \\
%\left(\xi-\mu_0\right)\left(\xi-\mu_0\right)^T-\gamma_2 \Sigma_0
%\end{array}\right)
\end{array}\right)\right|\right|\\
&=\left(2\|\xi'- \xi''\|^2 +\| \xi'(\xi')^T-\xi''(\xi'')^T-2\mu_0(\xi'-\xi'')^T\|^2\right)^{1/2}\\
%&\leq\big(2\|\xi'- \xi''\|^2+(\| \xi'(\xi')^T-\xi''(\xi'')^T\|+2\|\mu_0(\xi'-\xi'')^T\|)^2\big)^{1/2}\\
%\qquad\qquad\quad \text{(Triangle Inequality)}\\
&\leq \big(2\|\xi'-\xi''\|^2+(\| \xi'(\xi')^T-\xi''(\xi'')^T\|+2M_\Xi\|(\xi'-\xi'')^T\|)^2\big)^{1/2}   \quad\;\text{($\|\mu_0\|\leq M_\Xi$)}\\
%&\leq 2||\xi'-\xi''||+|| \xi'(\xi')^T-\xi''(\xi'')^T||+2M_\Xi||\xi'-\xi''|| \\
&\leq\big( 2\|\xi'-\xi''\|^2+16M_\Xi^2\|\xi'-\xi''\|^2 \big)^{1/2}\qquad \qquad\qquad\qquad\qquad\qquad\; \text{(Using \eqref{matrixproductTerm})}\\
%&\leq 2||\xi'- \xi''||+\max\{||\xi'- \xi''||, ||\xi'- \xi''||+2M_\Xi||\xi'-\xi''||\}\qquad \text{(Applying \eqref{norm1_inequaility})}\\
&\leq \sqrt{2+16M_\Xi^2}\|\xi'-\xi''\|.
\end{aligned}\nonumber
\end{equation}
Therefore, $\psi(\cdot)$ is Lipschitz continuous with Lipschitz constant $\kappa^\psi=\sqrt{2+16M_\Xi^2}$. 
%Let  $\mathcal{I}$ be a matrix of size $\psi(\cdot)$ with each component being 1. 
Since $\xi\in \Xi\subseteq \mathbb{R}^d$, $\|\textbf{1}\|= \sqrt{d+d+d^2}=\sqrt{d^2+2d}$. From Theorem~\ref{moment_set_converge},  when ${|\Omega|}$ is sufficiently large, 
\begin{equation}\nonumber
    \begin{aligned}
    & \mathrm{H}\left(\mathcal{P}_{|\Omega|}, \mathcal{P}\right) \leq \left(1+\frac{2\kappa^\psi\|\textbf{1}\|M_\Xi}{\alpha}\right)\beta_{|\Omega|}
     =   \left(1+\frac{2\sqrt{2+16M_\Xi^2}M_\Xi\sqrt{d^2+2d}}{\alpha}\right)\beta_{|\Omega|}.   
    \end{aligned}
\end{equation}
$~\Box$
\end{proof}

\subsection{$l_n$-Wasserstein Ambiguity Set}
Chen, Sun and Xu \cite[Theorem 2]{chen2021decomposition} provided a convergence result for the $l_1$-Wasserstein ambiguity sets. For the ambiguity sets
\begin{equation}\label{ambigutyset_distance_general}
\begin{aligned}
 \mathcal{P} &=\{\mathbb{P}\mid \mathbb{W}_1(\mathbb{P},\mathbb{P}') \leq d_1\}, \quad \text{and}\\
 \mathcal{P}_{|\Omega|} &=\{\mathbb{P}_{|\Omega|}\mid \mathbb{W}_1(\mathbb{P}_{|\Omega|},\mathbb{P}'_{|\Omega|})\leq d_2, \mathbb{P}_{|\Omega|}=\left(p_1,\dots,p_{|\Omega|}\right),\sum_{\omega=1}^{|\Omega|}p_\omega=1, p_\omega\geq 0\; \forall \omega  \}, 
\end{aligned}\nonumber
 \end{equation}
$\mathrm{H}\left(\mathcal{P}_{|\Omega|}, \mathcal{P}\right)\leq \rho(\mathcal{P}_{|\Omega|}, \mathcal{P})+2\beta_{|\Omega|}+|d_1-d_2|$.
Here we consider the special case when $d_0=d_1=d_2$ and $\mathbb{P}'=\mathbb{P}'_{|\Omega|}$ and extend the result to a more general case for the $l_n$-Wasserstein ambiguity sets.
Let
\begin{equation}\label{ambigutyset_distance}
\begin{aligned}
 \mathcal{P} &=\{\mathbb{P}\mid \mathbb{W}_n(\mathbb{P},\mathbb{P}') \leq d_0\}, \quad \text{and}\\
 \mathcal{P}_{|\Omega|} &=\{\mathbb{P}_{|\Omega|}\mid \mathbb{W}_n(\mathbb{P}_{|\Omega|},\mathbb{P}')\leq d_0, \mathbb{P}_{|\Omega|}=\left(p_1,\dots,p_{|\Omega|}\right),\sum_{\omega=1}^{|\Omega|}p_\omega=1, p_\omega\geq 0\; \forall \omega  \}, 
\end{aligned}
 \end{equation}
 where $\mathbb{P}'$ is the nominal probability distribution of $\xi$, $d_0 \geq 0$ is the Wasserstein radius and $\mathbb{W}_n$ is the $l_n$-Wasserstein distance \cite[Definition 2.5]{Pflug14}.  
 The $l_n$-Wasserstein metric between $\mathbb{P},\mathbb{Q}\in \mathcal{D}$ is defined as  
 \begin{equation}\label{def_ln_wasserstein}
\mathbb{W}_n(\mathbb{Q},\mathbb{P})=\left(\inf_\pi \iint_{\Xi\times\Xi}\left\|\xi-\xi'\right\|^n \pi\left(d \xi, d \xi'\right)\right)^{1/n},  \; n\geq 1,\nonumber
\end{equation}
where $\pi$ is a joint distribution of $\xi$ and $\xi'$ with marginal $\mathbb{P}$ and $\mathbb{Q}$.

We first introduce several properties of the $l_n-$Wasserstein metric. 
 \begin{lemma}(Monotonicity and Convexity of the $l_n-$Wasserstein metric)\label{properties_ln_wasserstein}

(i)\cite[Section 2.3]{Panaretos19} 
 Let $\mathbb{P},\mathbb{Q}\in\mathcal{D}$. % and $\delta_\Xi$ be the diameter of $\Xi$.
For $n_1 \leq n_2$, then 
\begin{equation}\label{bound_ln_wasserstein}
\begin{aligned}
  & \mathbb{W}_{n_1}(\mathbb{P}, \mathbb{Q}) \leq \mathbb{W}_{n_2}(\mathbb{P}, \mathbb{Q}) \\ & \text{and} \qquad \mathbb{W}_{n_2}(\mathbb{P}, \mathbb{Q})^{n_2} \leq  \mathbb{W}_{n_1}(\mathbb{P}, \mathbb{Q})^{n_1}(2M_\Xi)^{n_2-n_1}.   
\end{aligned}
\end{equation}

(ii)\cite[Lemma 2.10]{Pflug14} Let $\mathbb{P}, \mathbb{Q}_0,\mathbb{Q}_1\in\mathcal{D}$, then for $0 \leq \lambda \leq 1$, it holds that
\begin{equation}\label{convexity_ln_wasserstein}
    \mathbb{W}_n\left(\mathbb{P},(1-\lambda) \mathbb{Q}_0+\lambda \mathbb{Q}_1\right)^n \leq(1-\lambda) \mathbb{W}_n\left(\mathbb{P}, \mathbb{Q}_0\right)^n+\lambda \mathbb{W}_n\left(\mathbb{P}, \mathbb{Q}_1\right)^n.
\end{equation}
 \end{lemma}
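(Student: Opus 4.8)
The plan is to derive both parts directly from the coupling (Kantorovich) definition of $\mathbb{W}_n$ given just before the lemma, exploiting Assumption~\ref{basic_assumption}.1, which makes $\Xi$ compact with diameter at most $2M_\Xi$; this both guarantees that optimal couplings exist and supplies the uniform bound $\|\xi-\xi'\|\le 2M_\Xi$ on $\Xi\times\Xi$. Since both inequalities are classical (they are recorded in \cite{Panaretos19,Pflug14}), I would give short self-contained arguments rather than merely citing.

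For the first inequality in part (i), I would fix an arbitrary coupling $\pi$ of $\mathbb{P}$ and $\mathbb{Q}$ and apply Lyapunov's inequality (monotonicity of $L^p$ norms on the probability space $(\Xi\times\Xi,\pi)$) to the map $(\xi,\xi')\mapsto\|\xi-\xi'\|$ with exponents $n_1\le n_2$, giving $\bigl(\int\|\xi-\xi'\|^{n_1}\,\pi(d\xi,d\xi')\bigr)^{1/n_1}\le\bigl(\int\|\xi-\xi'\|^{n_2}\,\pi(d\xi,d\xi')\bigr)^{1/n_2}$. The left-hand side is at least $\mathbb{W}_{n_1}(\mathbb{P},\mathbb{Q})$, so $\mathbb{W}_{n_1}(\mathbb{P},\mathbb{Q})$ bounds the right-hand side for every $\pi$; taking the infimum over $\pi$ on the right then yields $\mathbb{W}_{n_1}(\mathbb{P},\mathbb{Q})\le\mathbb{W}_{n_2}(\mathbb{P},\mathbb{Q})$. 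For the second inequality, I would take an optimal coupling $\pi^\star$ for $\mathbb{W}_{n_1}(\mathbb{P},\mathbb{Q})$, use $\|\xi-\xi'\|^{n_2}=\|\xi-\xi'\|^{n_1}\|\xi-\xi'\|^{n_2-n_1}\le(2M_\Xi)^{n_2-n_1}\|\xi-\xi'\|^{n_1}$ on $\Xi\times\Xi$, integrate against $\pi^\star$, and conclude with $\mathbb{W}_{n_2}(\mathbb{P},\mathbb{Q})^{n_2}\le\int\|\xi-\xi'\|^{n_2}\,\pi^\star(d\xi,d\xi')$.

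For part (ii), after disposing of the trivial cases $\lambda\in\{0,1\}$, I would take optimal couplings $\pi_0$ and $\pi_1$ of $(\mathbb{P},\mathbb{Q}_0)$ and $(\mathbb{P},\mathbb{Q}_1)$ and form the mixture $\pi_\lambda=(1-\lambda)\pi_0+\lambda\pi_1$. A direct marginal check shows $\pi_\lambda$ has first marginal $(1-\lambda)\mathbb{P}+\lambda\mathbb{P}=\mathbb{P}$ and second marginal $(1-\lambda)\mathbb{Q}_0+\lambda\mathbb{Q}_1$, so it is admissible in the definition of $\mathbb{W}_n\bigl(\mathbb{P},(1-\lambda)\mathbb{Q}_0+\lambda\mathbb{Q}_1\bigr)$; bounding $\mathbb{W}_n\bigl(\mathbb{P},(1-\lambda)\mathbb{Q}_0+\lambda\mathbb{Q}_1\bigr)^n$ by $\int\|\xi-\xi'\|^n\,\pi_\lambda(d\xi,d\xi')$ and expanding the mixture gives exactly $(1-\lambda)\mathbb{W}_n(\mathbb{P},\mathbb{Q}_0)^n+\lambda\mathbb{W}_n(\mathbb{P},\mathbb{Q}_1)^n$.

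I do not expect any genuine obstacle here; the only points that need care are the bookkeeping with the infima in the first inequality of (i) — in particular passing to the infimum on the correct side — and invoking compactness of $\Xi$ to justify existence of the optimal couplings used in the second inequality of (i) and in (ii). If one preferred to avoid existence of optimal couplings, each ``optimal $\pi$'' could be replaced by an $\varepsilon$-optimal one, followed by an $\varepsilon\to 0$ limit.
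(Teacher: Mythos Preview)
Your proof is correct. The paper itself does not prove this lemma at all: it simply records the two inequalities with citations to \cite{Panaretos19} and \cite{Pflug14} and moves on. What you have written is therefore strictly more than the paper offers --- a clean, self-contained derivation from the coupling definition, using exactly the compactness hypothesis (Assumption~\ref{basic_assumption}.1) that the paper has available. The bookkeeping you flag in part~(i) (passing to the infimum on the correct side) and the mixture-coupling construction in part~(ii) are handled correctly; the remark about replacing optimal couplings by $\varepsilon$-optimal ones is a sound alternative, though on a compact $\Xi$ optimal couplings do exist.
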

Recall also that the $l_1-$Wasserstein metric is equivalent to the Kantorovich metric \eqref{define_distanceP}.
\begin{theorem}\label{duality}(Kantorovich-Rubenstein Theorem)\cite{Edwards11}
The $l_1$-Wasserstein metric between $\mathbb{P},\mathbb{Q}\in \mathcal{D}$,  
$\mathbb{W}_1(\mathbb{Q},\mathbb{P})=\inf_\pi \iint_{\Xi\times\Xi}\left\|\xi-\xi'\right\| \pi\left(d \xi, d \xi'\right),$
can be equivalently written as
$$\rho(\mathbb{Q},\mathbb{P})=\sup _{g \in \mathcal{G}}\left|\mathbb{E}_{\mathbb{Q}}[g(\xi)]-\mathbb{E}_{\mathbb{P}}[g(\xi)]\right|,$$
where $\mathcal{G}$ is a family of Lipschitz continuous functions with constant 1. 
\end{theorem}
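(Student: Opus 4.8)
The plan is to prove the two inequalities $\rho(\mathbb{Q},\mathbb{P})\le \mathbb{W}_1(\mathbb{Q},\mathbb{P})$ and $\mathbb{W}_1(\mathbb{Q},\mathbb{P})\le \rho(\mathbb{Q},\mathbb{P})$, using throughout that $\Xi$ is compact (Assumption~\ref{basic_assumption}.1) so that all topological steps are routine. The first inequality is elementary; the second is the substance of the Kantorovich--Rubinstein duality recorded in \cite{Edwards11}.

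For the easy direction, take any coupling $\pi$ of $\mathbb{Q}$ and $\mathbb{P}$ (a Borel probability measure on $\Xi\times\Xi$ with the prescribed marginals) and any $g\in\mathcal{G}$; then
\[
\mathbb{E}_{\mathbb{Q}}[g(\xi)]-\mathbb{E}_{\mathbb{P}}[g(\xi)]=\iint_{\Xi\times\Xi}\big(g(\xi)-g(\xi')\big)\,\pi(d\xi,d\xi')\le \iint_{\Xi\times\Xi}\|\xi-\xi'\|\,\pi(d\xi,d\xi').
\]
Taking the infimum over $\pi$ and then the supremum over $g\in\mathcal{G}$ gives $\sup_{g\in\mathcal{G}}\big(\mathbb{E}_{\mathbb{Q}}[g(\xi)]-\mathbb{E}_{\mathbb{P}}[g(\xi)]\big)\le \mathbb{W}_1(\mathbb{Q},\mathbb{P})$. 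Because $\mathcal{G}$ is symmetric ($g\in\mathcal{G}\iff -g\in\mathcal{G}$), replacing $g$ by $-g$ shows $\sup_{g\in\mathcal{G}}\big(\mathbb{E}_{\mathbb{Q}}[g]-\mathbb{E}_{\mathbb{P}}[g]\big)=\sup_{g\in\mathcal{G}}\big|\mathbb{E}_{\mathbb{Q}}[g]-\mathbb{E}_{\mathbb{P}}[g]\big|=\rho(\mathbb{Q},\mathbb{P})$, so this step yields $\rho(\mathbb{Q},\mathbb{P})\le \mathbb{W}_1(\mathbb{Q},\mathbb{P})$.

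For the reverse inequality I would view $\mathbb{W}_1(\mathbb{Q},\mathbb{P})$ as the value of the transportation linear program over the weak-$*$ compact convex set of couplings and pass to its Lagrangian dual, $\sup\{\int\phi\,d\mathbb{Q}+\int\psi\,d\mathbb{P}:\phi,\psi\in C(\Xi),\ \phi(\xi)+\psi(\xi')\le\|\xi-\xi'\|\ \forall\xi,\xi'\in\Xi\}$. The main obstacle is the absence of a duality gap: on a compact metric space this follows from a minimax theorem (using weak-$*$ compactness of the coupling set and continuity and boundedness of the cost $\|\xi-\xi'\|$ on $\Xi\times\Xi$), or from the Fenchel--Rockafellar theorem after verifying the qualification condition, and it is precisely the classical statement cited as \cite{Edwards11}. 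Granting strong duality, I would reduce the dual to $1$-Lipschitz functions via the $c$-transform: for a dual-feasible pair $(\phi,\psi)$, passing to $\psi^{c}(\xi'):=\inf_{\xi\in\Xi}(\|\xi-\xi'\|-\phi(\xi))$ and then to $(\psi^{c})^{c}$ does not decrease the objective, and since the cost $\|\cdot-\cdot\|$ is itself a metric these transforms produce $\psi^{c}=-\phi$ with $\phi$ being $1$-Lipschitz, i.e.\ $\phi\in\mathcal{G}$. Hence $\mathbb{W}_1(\mathbb{Q},\mathbb{P})\le\sup_{g\in\mathcal{G}}\big(\mathbb{E}_{\mathbb{Q}}[g]-\mathbb{E}_{\mathbb{P}}[g]\big)=\rho(\mathbb{Q},\mathbb{P})$, and combining with the previous paragraph gives $\mathbb{W}_1(\mathbb{Q},\mathbb{P})=\rho(\mathbb{Q},\mathbb{P})$.
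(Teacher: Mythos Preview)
The paper does not supply a proof of this theorem at all: it is quoted as a known result from the literature, with a citation to \cite{Edwards11}, and then used as a tool in Section~\ref{sec:examples}. So there is no ``paper's own proof'' to compare against; you have gone beyond what the paper does by actually sketching the classical argument.

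Your sketch is the standard two-step proof (weak duality via any coupling, strong duality via the linear-programming dual and $c$-concavification), and on a compact $\Xi$ it goes through. One notational wrinkle: what you write as $\psi^{c}(\xi'):=\inf_{\xi}(\|\xi-\xi'\|-\phi(\xi))$ is the $c$-transform of $\phi$, not of $\psi$; the correct chain is to replace $\psi$ by $\phi^{c}\ge\psi$, then replace $\phi$ by $(\phi^{c})^{c}\ge\phi$, and finally observe that for a metric cost the function $g:=(\phi^{c})^{c}$ is $1$-Lipschitz and satisfies $g^{c}=-g$, so the dual pair collapses to $(g,-g)$ with $g\in\mathcal{G}$. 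Your sentence ``these transforms produce $\psi^{c}=-\phi$ with $\phi$ being $1$-Lipschitz'' compresses this in a way that is not literally correct, though the intended conclusion is right. The only genuinely nontrivial step---no duality gap---you explicitly flag and defer to \cite{Edwards11}, which is exactly what the paper itself does for the whole statement.
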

We also use the following result from Plfug and Pichler \cite{Pflug14}. It establishes the convergence of the Hausdorff metric between the given probability measure and its discrete approximation. This result was also used by \cite[Proposition 7]{Liu19} in their convergence analysis.
\begin{lemma} (Convergence of Hausdorff Distance between Distributions)\cite[Lemma 4.9]{Pflug14}\label{lemma:distance}
Given a $\mathbb{P} \in \mathcal{P}$, let $\hat{\mathbb{P}}=\left(p_1,\dots,p_{|\Omega|}\right)$ be a solution of \eqref{approximate_probability}: 
%Let $\mathbb{P}\in\mathcal{P}$. A solution to the  following optimization problem exists:
\begin{equation}\label{approximate_probability}
\begin{aligned}
%\mathbb{P}_{|\Omega|} =&\arg
&\min_{\mathbb{Q}\in \mathcal{D}}\;\; \rho(\mathbb{Q},\mathbb{P})\quad  %\quad \text{and} \quad \mathbb{P}_{|\Omega|}\in \mathcal{P}_{|\Omega|}\quad \mathrm{d}(\mathbb{P}_{|\Omega|},\mathcal{P}_{|\Omega|})\leq C\beta_{|\Omega|}.
\text{s.t.} \;\sum_{\omega=1}^{|\Omega|}p_\omega=1, \; p_\omega\geq 0,\quad \forall \omega.
\end{aligned}
\end{equation}
Then
  $\rho\left(\hat{\mathbb{P}}, \mathbb{P}\right)\leq \beta_{|\Omega|}.$
\end{lemma}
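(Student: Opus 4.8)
The plan is to exhibit a single feasible point of problem~\eqref{approximate_probability} whose Kantorovich distance to $\mathbb{P}$ is at most $\beta_{|\Omega|}$; since $\hat{\mathbb{P}}$ is a minimiser, this immediately forces $\rho(\hat{\mathbb{P}},\mathbb{P})\le\beta_{|\Omega|}$.

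First I would construct a nearest-neighbour projection of $\mathbb{P}$ onto $\Xi_{|\Omega|}$. For each $\omega\in\{1,\dots,|\Omega|\}$ let $V_\omega:=\{\xi\in\Xi:\|\xi-\xi_\omega\|\le\|\xi-\xi_j\|\ \text{for all }j\}$ be the Voronoi cell of $\xi_\omega$ in $\Xi$, and resolve overlaps by assigning each $\xi$ to the lowest-index cell containing it, obtaining a measurable partition $\{A_\omega\}_{\omega=1}^{|\Omega|}$ of $\Xi$ with $A_\omega\subseteq V_\omega$. Define the projection map $T:\Xi\to\Xi_{|\Omega|}$ by $T(\xi):=\xi_\omega$ for $\xi\in A_\omega$, and let $\bar{\mathbb{P}}:=(\mathbb{P}(A_1),\dots,\mathbb{P}(A_{|\Omega|}))$ be the pushforward of $\mathbb{P}$ under $T$. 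By construction $\bar{\mathbb{P}}$ is a discrete distribution supported on $\Xi_{|\Omega|}$ with nonnegative weights summing to one, hence feasible for~\eqref{approximate_probability}.

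Next I would bound the distance. The joint law of $(\xi,T(\xi))$ with $\xi\sim\mathbb{P}$ is a coupling of $\mathbb{P}$ and $\bar{\mathbb{P}}$, so by the duality in Theorem~\ref{duality},
\[
\rho(\bar{\mathbb{P}},\mathbb{P})=\mathbb{W}_1(\bar{\mathbb{P}},\mathbb{P})\le\int_\Xi\|\xi-T(\xi)\|\,d\mathbb{P}(\xi)=\int_\Xi\min_{\omega}\|\xi-\xi_\omega\|\,d\mathbb{P}(\xi)\le\beta_{|\Omega|},
\]
where the last step uses $\min_{\omega}\|\xi-\xi_\omega\|=\mathrm{d}(\xi,\Xi_{|\Omega|})\le D(\Xi,\Xi_{|\Omega|})=\beta_{|\Omega|}$ for every $\xi\in\Xi$ (from~\eqref{beta_distance}) together with $\mathbb{P}(\Xi)=1$. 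Since $\hat{\mathbb{P}}$ minimises $\rho(\cdot,\mathbb{P})$ over all discrete distributions supported on $\Xi_{|\Omega|}$ and $\bar{\mathbb{P}}$ is such a competitor, $\rho(\hat{\mathbb{P}},\mathbb{P})\le\rho(\bar{\mathbb{P}},\mathbb{P})\le\beta_{|\Omega|}$, which is the claim.

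The computation is short; the only point that needs genuine care is the measurability of the cells $A_\omega$ and of $T$, so that $\bar{\mathbb{P}}$ and the coupling $\pi$ are well defined. This rests on the standard facts that Voronoi cells of finitely many points are Borel sets and that ties can be broken measurably, and I expect this bookkeeping --- rather than any analytic difficulty --- to be the part to get right; alternatively one may simply invoke the measurable-selection argument already contained in \cite[Lemma 4.9]{Pflug14}.
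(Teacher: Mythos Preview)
Your argument is correct and is essentially the standard Voronoi/nearest-neighbour pushforward construction that underlies \cite[Lemma~4.9]{Pflug14}. Note, however, that the paper does not supply its own proof of this lemma: it is stated as a cited result from \cite{Pflug14} and used as a black box, so there is nothing in the paper to compare against beyond the citation itself.
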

 Liu, Yuan, and Zhang \cite[Theorem 3.4]{Liu21} showed that $\mathrm{H}\left(\mathcal{P}_{|\Omega|}, \mathcal{P}\right) \leq 2\beta_{|\Omega|}$ for the case of $n=1$. The following theorem shows the convergence of $\mathcal{P}_{|\Omega|}$ to $\mathcal{P}$ for any $n$.
\begin{theorem}
Let $\mathcal{P}$ and $\mathcal{P}_{|\Omega|}$ be defined as in \eqref{ambigutyset_distance}. Then for sufficiently large ${|\Omega|}$, 
$\mathrm{H}\left(\mathcal{P}_{|\Omega|}, \mathcal{P}\right) \leq 2\beta_{|\Omega|}$.
\end{theorem}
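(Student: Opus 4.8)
The plan is to bound the two terms in the Hausdorff metric $\mathrm{H}(\mathcal{P}_{|\Omega|},\mathcal{P})$ of \eqref{define_distanceSet} separately, and one of them is immediate: every $\mathbb{P}_{|\Omega|}\in\mathcal{P}_{|\Omega|}$ is a distribution on $\Xi$ satisfying $\mathbb{W}_n(\mathbb{P}_{|\Omega|},\mathbb{P}')\le d_0$, hence lies in $\mathcal{P}$, so $\mathrm{d}(\mathbb{P}_{|\Omega|},\mathcal{P})=0$ and $\max_{\mathbb{P}_{|\Omega|}\in\mathcal{P}_{|\Omega|}}\mathrm{d}(\mathbb{P}_{|\Omega|},\mathcal{P})=0$. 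The real task is to show $\mathrm{d}(\mathbb{P},\mathcal{P}_{|\Omega|})\le 2\beta_{|\Omega|}$ for every $\mathbb{P}\in\mathcal{P}$, i.e., to construct, for each such $\mathbb{P}$, a discrete distribution supported on $\Xi_{|\Omega|}$ that is feasible for $\mathcal{P}_{|\Omega|}$ and within Kantorovich distance $2\beta_{|\Omega|}$ of $\mathbb{P}$.

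Fix $\mathbb{P}\in\mathcal{P}$ and let $T:\Xi\to\Xi_{|\Omega|}$ send each point to a nearest point of $\Xi_{|\Omega|}$, so $\|\xi-T(\xi)\|\le\beta_{|\Omega|}$ by \eqref{beta_distance}. Put $\hat{\mathbb{P}}:=T_\#\mathbb{P}$ and $\hat{\mathbb{P}}':=T_\#\mathbb{P}'$, both discrete on $\Xi_{|\Omega|}$. Using the coupling given by the law of $(\xi,T(\xi))$ one obtains $\mathbb{W}_n(\mathbb{P},\hat{\mathbb{P}})\le\beta_{|\Omega|}$ and $\mathbb{W}_n(\mathbb{P}',\hat{\mathbb{P}}')\le\beta_{|\Omega|}$; in particular $\rho(\mathbb{P},\hat{\mathbb{P}})=\mathbb{W}_1(\mathbb{P},\hat{\mathbb{P}})\le\beta_{|\Omega|}$ by Theorem~\ref{duality} and Lemma~\ref{properties_ln_wasserstein}(i), which is the quantitative content of Lemma~\ref{lemma:distance}. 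The distribution $\hat{\mathbb{P}}$ is, however, not necessarily feasible: the triangle inequality gives $\mathbb{W}_n(\hat{\mathbb{P}},\mathbb{P}')\le\mathbb{W}_n(\hat{\mathbb{P}},\mathbb{P})+\mathbb{W}_n(\mathbb{P},\mathbb{P}')\le d_0+\beta_{|\Omega|}$, which may exceed $d_0$.

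To restore feasibility I would tilt $\hat{\mathbb{P}}$ toward $\hat{\mathbb{P}}'$: set $\mathbb{P}_{|\Omega|}:=(1-\lambda)\hat{\mathbb{P}}+\lambda\hat{\mathbb{P}}'$, again discrete on $\Xi_{|\Omega|}$. Convexity of $\mathbb{W}_n^n$ in its argument (Lemma~\ref{properties_ln_wasserstein}(ii)) together with the two bounds above gives
\begin{equation}
\mathbb{W}_n(\mathbb{P}_{|\Omega|},\mathbb{P}')^n\le(1-\lambda)(d_0+\beta_{|\Omega|})^n+\lambda\,\beta_{|\Omega|}^n,\nonumber
\end{equation}
and the right-hand side is $\le d_0^n$ exactly when $\lambda\ge\lambda_{|\Omega|}^*:=[(d_0+\beta_{|\Omega|})^n-d_0^n]/[(d_0+\beta_{|\Omega|})^n-\beta_{|\Omega|}^n]$, which lies in $(0,1)$ and tends to $0$ at rate $\beta_{|\Omega|}$ once $|\Omega|$ is large enough that $\beta_{|\Omega|}<d_0$. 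Choosing $\lambda=\lambda_{|\Omega|}^*$ makes $\mathbb{P}_{|\Omega|}\in\mathcal{P}_{|\Omega|}$, and then
\begin{equation}
\rho(\mathbb{P},\mathbb{P}_{|\Omega|})\le\mathbb{W}_1(\mathbb{P},\hat{\mathbb{P}})+\mathbb{W}_1(\hat{\mathbb{P}},\mathbb{P}_{|\Omega|})\le\beta_{|\Omega|}+\lambda_{|\Omega|}^*\,\mathbb{W}_1(\hat{\mathbb{P}},\hat{\mathbb{P}}'),\nonumber
\end{equation}
using convexity of $\mathbb{W}_1$ (Lemma~\ref{properties_ln_wasserstein}(ii) with $n=1$) in the last step; one then bounds $\mathbb{W}_1(\hat{\mathbb{P}},\hat{\mathbb{P}}')$ by $\mathrm{diam}(\Xi)\le 2M_\Xi$ (Assumption~\ref{basic_assumption}.1), or more sharply by $d_0+2\beta_{|\Omega|}$ via the triangle inequality through $\mathbb{P},\mathbb{P}'$ and $\mathbb{W}_1\le\mathbb{W}_n$. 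Since $\lambda_{|\Omega|}^*=O(\beta_{|\Omega|})$, for $|\Omega|$ sufficiently large this drives $\rho(\mathbb{P},\mathbb{P}_{|\Omega|})$ below $2\beta_{|\Omega|}$, and with the first paragraph, $\mathrm{H}(\mathcal{P}_{|\Omega|},\mathcal{P})\le 2\beta_{|\Omega|}$.

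The delicate step is this final accounting: the tilting cost $\lambda_{|\Omega|}^*\,\mathbb{W}_1(\hat{\mathbb{P}},\hat{\mathbb{P}}')$ is $O(\beta_{|\Omega|})$, but landing at the clean constant $2$ (rather than a larger multiple of $\beta_{|\Omega|}$) requires using the exact form of $\lambda_{|\Omega|}^*$ together with a sharp estimate of $\mathbb{W}_1(\hat{\mathbb{P}},\hat{\mathbb{P}}')$, and this is where the hypothesis ``$|\Omega|$ sufficiently large'' — equivalently $\beta_{|\Omega|}$ small relative to $d_0$ — does its work. A secondary point is bookkeeping of the exponent: feasibility has to be argued in $\mathbb{W}_n$ through the convexity of $\mathbb{W}_n^n$, closeness is measured in $\rho=\mathbb{W}_1$ through Theorem~\ref{duality}, and the monotonicity $\mathbb{W}_1\le\mathbb{W}_n$ from Lemma~\ref{properties_ln_wasserstein}(i) is what connects the two.
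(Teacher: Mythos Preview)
Your overall architecture matches the paper's: one direction is trivial since $\mathcal{P}_{|\Omega|}\subseteq\mathcal{P}$; for the other you discretize $\mathbb{P}$ to $\hat{\mathbb{P}}$ with $\rho(\mathbb{P},\hat{\mathbb{P}})\le\beta_{|\Omega|}$, then take a convex combination with a point near the center to restore feasibility, appealing to Lemma~\ref{properties_ln_wasserstein}(ii). The paper does exactly this, tilting $\hat{\mathbb{P}}$ toward $\mathbb{P}'$ (which, in the setting of \eqref{ambigutyset_distance}, is already taken to be supported on $\Xi_{|\Omega|}$, so it coincides with your $\hat{\mathbb{P}}'$).

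There is, however, a genuine gap in your final accounting for $n>1$. A first-order expansion of your mixing weight gives $\lambda_{|\Omega|}^*\sim n\beta_{|\Omega|}/d_0$ as $\beta_{|\Omega|}\to 0$, while your sharpest bound $\mathbb{W}_1(\hat{\mathbb{P}},\hat{\mathbb{P}}')\le d_0+2\beta_{|\Omega|}$ is of order $d_0$. Hence the tilting cost $\lambda_{|\Omega|}^*\,\mathbb{W}_1(\hat{\mathbb{P}},\hat{\mathbb{P}}')$ is asymptotically $n\beta_{|\Omega|}$, and your total bound is $(n+1)\beta_{|\Omega|}+o(\beta_{|\Omega|})$, not $2\beta_{|\Omega|}$. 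Taking $|\Omega|$ large can only absorb lower-order terms; it cannot shrink the leading coefficient from $n+1$ to $2$, so the appeal to ``sufficiently large $|\Omega|$'' in your last paragraph does not close the gap. (For $n=1$ your argument does give the constant $2$; the issue is purely for $n>1$.)

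The paper avoids this by a different feasibility argument. Instead of the $\mathbb{W}_n$ triangle inequality (which produces the $(d_0+\beta_{|\Omega|})^n$ term and hence the factor $n$), it invokes the second inequality of Lemma~\ref{properties_ln_wasserstein}(i), namely $\mathbb{W}_n(\hat{\mathbb{P}},\mathbb{P}')^n\le\rho(\hat{\mathbb{P}},\mathbb{P}')\,(2M_\Xi)^{n-1}$, which keeps the feasibility estimate \emph{linear} in the Kantorovich distance. With the specific choice $\lambda=\beta_{|\Omega|}/(\beta_{|\Omega|}+d_1)$, where $d_1:=d_0^n/(2M_\Xi)^{n-1}$, the single bound $\rho(\hat{\mathbb{P}},\mathbb{P}')\le\beta_{|\Omega|}+d_1$ simultaneously yields $(1-\lambda)\,\rho(\hat{\mathbb{P}},\mathbb{P}')\le d_1$ (so $\mathbb{W}_n(\mathbb{P}_c,\mathbb{P}')\le d_0$) and $\lambda\,\rho(\hat{\mathbb{P}},\mathbb{P}')\le\beta_{|\Omega|}$ (so the tilting cost is exactly $\beta_{|\Omega|}$), giving $2\beta_{|\Omega|}$ independently of $n$. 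That interpolation step is the ingredient your argument is missing.
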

\begin{proof}
Since $\mathcal{P}_{|\Omega|}\subseteq \mathcal{P}$, we have that $\max_{\mathbb{P}_{|\Omega|}\in\mathcal{P}_{|\Omega|}}\mathrm{d}\left(\mathbb{P}_{|\Omega|}, \mathcal{P}\right)=0$. We construct a new set $\mathcal{P}^{(1)}$, where 
\begin{equation}\label{new_w1_set}
\begin{aligned}
\mathcal{P}^{(1)}&:=\{\mathbb{P}\mid \rho(\mathbb{P},\mathbb{P}') \leq d_1\}\\
  & = \{\mathbb{P}\mid (2M_\Xi)^{\frac{n-1}{n}}\rho(\mathbb{P},\mathbb{P}')^{\frac{1}{n}} \leq (2M_\Xi)^{\frac{n-1}{n}}d_1^{\frac{1}{n}}\}\\  
   & = \{\mathbb{P}\mid \mathbb{W}_n(\mathbb{P},\mathbb{P}')\leq (2M_\Xi)^{\frac{n-1}{n}}\rho(\mathbb{P},\mathbb{P}')^{\frac{1}{n}} \leq (2M_\Xi)^{\frac{n-1}{n}}d_1^{\frac{1}{n}}\}. \\ 
   %&\qquad \qquad \qquad\qquad\qquad\qquad\qquad\qquad\qquad \text{(Using \eqref{bound_ln_wasserstein})}  
\end{aligned}
\end{equation}
%For any $\mathbb{P}\in\mathcal{P}^{(1)}$, $$\mathbb{W}_n(\mathbb{P},\mathbb{P}')^n\leq (2M_\Xi)^{n-1}\rho(\mathbb{P},\mathbb{P}')\leq (2M_\Xi)^{n-1}\max_{\mathbb{P}\in\mathcal{P}^{(1)}}\rho(\mathbb{P},\mathbb{P}')\leq (2M_\Xi)^{n-1}d_1\leq d_0^n,$$
%which means that $\mathbb{P}\in\mathcal{P}$ and $\mathcal{P}^{(1)}\subseteq\mathcal{P}$. 
%Also, for any $\mathbb{P}\in\mathcal{P}$, $$\rho(\mathbb{P},\mathbb{P}')\leq \max_{\mathbb{P}\in\mathcal{P}}\rho(\mathbb{P},\mathbb{P}')=d_1,$$
%which means that $\mathbb{P}\in\mathcal{P}^{(1)}$ and $\mathcal{P}\subseteq\mathcal{P}^{(1)}$. 
Let $d_1=\frac{d_0^n}{(2M_\Xi)^{n-1}}$ and then  $\mathcal{P}=\mathcal{P}^{(1)}$. 
%Similarly, we define $$\mathcal{P}^{(1)}_{|\Omega|}:=\{\mathbb{P}_{|\Omega|}\mid \rho(\mathbb{P}_{|\Omega|},\mathbb{P}')\leq d_1, \mathbb{P}_{|\Omega|}=\left(p_1,\dots,p_{|\Omega|}\right),\sum_{\omega=1}^{|\Omega|}p_\omega=1, p_\omega\geq 0\; \forall \omega  \}$$ and then $\mathcal{P}_{|\Omega|}=\mathcal{P}^{(1)}_{|\Omega|}$.
According to Lemma~\ref{lemma:distance}, for any $\mathbb{P}\in\mathcal{P}$, the solution of  \eqref{approximate_probability}, $\hat{\mathbb{P}}$, is a discrete probability measure and $\rho(\mathbb{P},\hat{\mathbb{P}})\leq \beta_{|\Omega|}$.
Then by triangle inequality and \eqref{new_w1_set}, 
\begin{equation}\label{hatp_p_distance}
    \rho(\hat{\mathbb{P}},\mathbb{P}')\leq \rho(\hat{\mathbb{P}}, \mathbb{P}) + \rho(\mathbb{P},\mathbb{P}') \leq \beta_{|\Omega|} + d_1.
\end{equation}
Let $\mathbb{P}_c=\frac{d_1}{\beta_{|\Omega|} + d_1}\hat{\mathbb{P}}+\frac{\beta_{|\Omega|}}{\beta_{|\Omega|} + d_1}\mathbb{P}'$. 
From Lemma~\ref{properties_ln_wasserstein}, 
\begin{equation}
\begin{aligned}
    &\mathbb{W}_n(\mathbb{P}_c, \mathbb{P}')^n \\ \leq &\;  \frac{\beta_{|\Omega|}}{\beta_{|\Omega|} + d_1}\mathbb{W}_n(\mathbb{P}', \mathbb{P}')^n+\frac{d_1}{\beta_{|\Omega|} + d_1}\mathbb{W}_n(\hat{\mathbb{P}}, \mathbb{P}')^n  \quad \text{(Using \eqref{convexity_ln_wasserstein})}\\
     \leq &\;  \frac{d_1}{\rho(\hat{\mathbb{P}},\mathbb{P}')}\mathbb{W}_n(\hat{\mathbb{P}}, \mathbb{P}')^n\qquad \quad \text{(Using \eqref{hatp_p_distance} and $\mathbb{W}_n(\mathbb{P}', \mathbb{P}')=0$)}\\
  \leq &\; \frac{d_1}{\rho(\hat{\mathbb{P}},\mathbb{P}')}\rho(\hat{\mathbb{P}}, \mathbb{P}')(2M_\Xi)^{n-1}  \qquad \qquad \qquad \qquad \text{(Using \eqref{bound_ln_wasserstein})}\\
   % &\leq \;\; d_0 && \text{($\delta_\Xi\leq 1$)}\\
    =&\; d_0^n. \qquad \qquad \qquad\qquad\qquad \qquad \qquad \quad \text{($d_1=\frac{d_0^n}{(2M_\Xi)^{n-1}}$)}\\
    \end{aligned}\nonumber
\end{equation}
Therefore, $ \mathbb{W}_n(\mathbb{P}_c, \mathbb{P}')\leq d_0$, which implies that $\mathbb{P}_c\in \mathcal{P}_{|\Omega|}$. Similarly, we have 
\begin{equation}
    \rho(\mathbb{P}_c,\hat{\mathbb{P}})\leq \frac{\beta_{|\Omega|}}{\beta_{|\Omega|} + d_1}\rho(\hat{\mathbb{P}}, \mathbb{P}')+\frac{d_1}{\beta_{|\Omega|} + d_1}\rho(\hat{\mathbb{P}}, \hat{\mathbb{P}})\leq \beta_{|\Omega|}.\nonumber
\end{equation}
Then for all $\mathbb{P}\in\mathcal{P}$, $\mathrm{d}\left(\mathbb{P}, \mathcal{P}_{|\Omega|}\right)\leq  \rho\left(\mathbb{P}, \mathbb{P}_c\right)\leq \rho\left(\mathbb{P}, \hat{\mathbb{P}}\right) + \rho\left(\mathbb{P}_c,\hat{\mathbb{P}}\right)\leq 2\beta_{|\Omega|}$. %As $\beta_{|\Omega|}\rightarrow 0$ when ${|\Omega|}\rightarrow 0$, 
Clearly,  $\max_{\mathbb{P}\in\mathcal{P}}\mathrm{d}\left(\mathbb{P}, \mathcal{P}_{|\Omega|}\right)\leq 2\beta_{|\Omega|}$. 
Therefore, from \eqref{define_distanceSet}, 
\begin{equation}\nonumber
    \begin{aligned}
  &\mathrm{H}\left(\mathcal{P}_{|\Omega|}, \mathcal{P}\right)
  =\max \left\{\max_{\mathbb{P}_{|\Omega|}\in\mathcal{P}_{|\Omega|}} \mathrm{d}\left(\mathbb{P}_{|\Omega|}, \mathcal{P}\right), \max_{\mathbb{P}\in\mathcal{P}}  \mathrm{d}\left(\mathbb{P}, \mathcal{P}_{|\Omega|}\right)\right\}\leq 2\beta_{|\Omega|}.     
    \end{aligned}
\end{equation}
$~\Box$
\end{proof}

%\begin{theorem}\cite[Theorem 3.4]{Liu21}
%Let $\mathcal{P}$ and $\mathcal{P}_{|\Omega|}$ be defined as in \eqref{ambigutyset_distance}.Then for ${|\Omega|}$ sufficiently large,  $\mathrm{H}\left(\mathcal{P}_{|\Omega|}, \mathcal{P}\right) \leq 2\beta_{|\Omega|}$.
%\end{theorem}
%% If you have bibdatabase file and want bibtex to generate the
%% bibitems, please use
%%
 \bibliographystyle{elsarticle-num} 
 \bibliography{cas-refs}

%% else use the following coding to input the bibitems directly in the
%% TeX file.

% \begin{thebibliography}{00}

% %% \bibitem{label}
% %% Text of bibliographic item

% \bibitem{}

% \end{thebibliography}
\end{document}